\newcommand{\tikzAngleOfLine}{\tikz@AngleOfLine}
\def\tikz@AngleOfLine(#1)(#2)#3{%
\pgfmathanglebetweenpoints{%
\pgfpointanchor{#1}{center}}{%
\pgfpointanchor{#2}{center}}
\pgfmathsetmacro{#3}{\pgfmathresult}%
}
\newcommand{\End}{\operatorname{End}}
\newcommand{\Hom}{\operatorname{Hom}}
\newcommand{\gldim}{\operatorname{gldim}}
\newcommand{\Ext}{\operatorname{Ext}}
\newcommand{\Tor}{\operatorname{Tor}}
\newcommand{\add}{\!\operatorname{add}}
\newcommand{\pdim}{\operatorname{pdim}}
\newcommand{\m}{\!\operatorname{-mod}} 
\newcommand{\rmod}{\operatorname{mod-}\!\!}
\newcommand{\rad}{\operatorname{rad}}
\renewcommand{\top}{\operatorname{top}}
\newcommand{\im}{\!\operatorname{im}} 
\newcommand{\coker}{\operatorname{coker}}
\newcommand{\soc}{\operatorname{soc}}
\newcommand{\domdim}{\operatorname{domdim}}
\newcommand{\Tr}{\operatorname{Tr}}
\newcommand{\grade}{\operatorname{grade}}
\newcommand{\Dom}{\operatorname{Dom}}
\newcommand{\TF}{\operatorname{TF}}
\newtheorem{numberingthm}{Theorem}[section] 
\theoremstyle{definition}
\newtheorem{example}[numberingthm]{Example}
\theoremstyle{plain}
\newtheorem{Prop}[numberingthm]{Proposition}
\newtheorem{Theorem}[numberingthm]{Theorem}
\newtheorem{Conjecture}[numberingthm]{Conjecture}
\newtheorem{Cor}[numberingthm]{Corollary}
\newtheorem{Lemma}[numberingthm]{Lemma}
\newtheorem{Question}[numberingthm]{Question}
\newtheorem*{nonumberthm}{Theorem}
\newtheoremstyle{cited}%
  {4pt}
  {4pt}
  {\itshape}
  {}
  {\bfseries}
  {.}
  {0.5em}
  {\thmname{#1}  \thmnote{\normalfont#3}}
\theoremstyle{cited}
\newtheorem*{thmintroduction}{Theorem}
\newtheorem*{corintroduction}{Corollary}
\theoremstyle{remark}
\newcommand{\Thmref}{Theorem~}
\newcommand{\Corref}{Corollary~}
\begin{document}

\title[]{Higher torsion-free Auslander-Reiten sequences and the dominant dimension of algebras}

\author[T. Cruz]{Tiago Cruz}
\address[Tiago Cruz]{Institut f\"ur Algebra und Zahlentheorie,
Universit\"at Stuttgart, Germany }
\email{tiago.cruz@mathematik.uni-stuttgart.de}

\author[R. Marczinzik]{Ren\'{e} Marczinzik}
\address[Ren\'{e} Marczinzik]{Mathematical Institute of the University of Bonn, Endenicher Allee 60, 53115 Bonn, Germany}
\email{marczire@math.uni-bonn.de}
\date{\today}

\begin{abstract}
	We generalise a theorem of Tachikawa about reflexive Auslander-Reiten sequences. We apply this to give a new characterisation of the dominant dimension of gendo-symmetric algebras. We also generalise a formula due to Reiten about the dominant dimension of an algebra $A$ and grades of torsion $A$-modules.
\end{abstract}
\subjclass[2020]{Primary: 16G10, Secondary: 16E10}
\keywords{reflexive modules, Auslander-Reiten theory, dominant dimension, grade}

\maketitle

\section{Introduction}
The concepts of grade, torsion freeness and torsion play an important role in commutative algebra, whereas Auslander--Reiten theory provides a toolset to visualise and study the representation theory of a finite-dimensional algebra. The main goal of this paper is to combine these concepts using homological techniques. This effort leads us to new characterisations of dominant dimension.
Given a noetherian ring $A$, 
let $(-)^{*}$ denote $\Hom_A(-,A)$.
To every $A$-module $M$ we can associate the canonical evaluation map $f_M : M \rightarrow M^{**}$, that sends $m$ to the map $\psi_m$ with $\psi_m(f)=f(m)$ for $f \in M^{*}$. Recall that $M$ is called \emph{torsionless} if $f_M$ is injective and \emph{reflexive} if $f_M$ is an isomorphism.
The study of reflexive modules is a classical topic in commutative algebra and algebraic geometry, we refer for example to \cite{En} and \cite{EK}. In the following assume that $A$ is a finite dimensional algebra over a field $k$.
For a natural number $n \geq 1$, a module $M$ is called \emph{$n$-torsion-free} if $\Ext_A^i(D(A), \tau(M))=0$ for all $i$ with $1 \leq i \leq n$, where $\tau$ denotes the Auslander-Reiten translate.
It is well known that a module is torsionless if and only if it is 1-torsion-free and a module is reflexive if and only if it is 2-torsion-free, see for example \citep[IV, Corollary 3.3]{ARS}.
Thus, $n$-torsion-free modules can be seen as a generalisation of torsion-free and reflexive modules. Easy examples of $n$-torsion-free modules for all $n \geq 1$ are projective modules.
In \cite{Ta}, Tachikawa defined an almost split sequence of the form $0 \rightarrow U \rightarrow X \rightarrow \tau^{-1}(U)\rightarrow 0$ to be reflexive if every module in the short exact sequence is reflexive.
He then defined for an algebra $A$ to have \emph{reflexive Auslander-Reiten sequences} if every almost split sequence of the form $0 \rightarrow U \rightarrow X \rightarrow \tau^{-1}(U) \rightarrow 0$ is reflexive for every indecomposable projective non-injective module $U$.
He characterised algebras with reflexive Auslander-Reiten sequences as follows, see \citep[Theorem 2.4.]{Ta}:  
\begin{nonumberthm}
	Let $A$ be a finite-dimensional algebra. Then $A$ has reflexive Auslander-Reiten sequences if and only if the following two conditions are satisfied:
	\begin{enumerate}
		\item $\domdim(A) \geq 2$
		\item $\domdim(R) \geq 4$, when $R:=\End_A(A \oplus D(A))$.
	\end{enumerate}
\end{nonumberthm}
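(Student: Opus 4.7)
My plan is to reduce the condition of having reflexive Auslander-Reiten sequences to Ext-vanishing conditions that can then be translated into dominant dimension statements via Mueller's theorem. Since every indecomposable projective module $U$ is automatically reflexive, the condition that the almost split sequence $0 \to U \to X \to \tau^{-1}U \to 0$ is reflexive reduces to the reflexivity of $X$ and $\tau^{-1}U$. Using the characterisation recalled in the introduction, namely that a module $M$ is reflexive iff it is $2$-torsion-free iff $\Ext_A^i(D(A),\tau M)=0$ for $i=1,2$, the problem becomes one about vanishing of certain Ext groups.

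The $\tau^{-1}U$ part is straightforward. Since $U$ is projective and non-injective, $\tau^{-1}U$ is non-projective, hence $\tau(\tau^{-1}U) \cong U$. Therefore $\tau^{-1}U$ is reflexive if and only if $\Ext_A^i(D(A),U)=0$ for $i=1,2$. Requiring this for every indecomposable projective non-injective $U$, and noting that projective-injective summands contribute trivially, this is equivalent to $\Ext_A^i(D(A),A)=0$ for $i=1,2$. By Mueller's theorem applied to the generator-cogenerator $A\oplus D(A)$ over $A$, this is precisely the statement $\domdim(R)\geq 4$.

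The middle term is the core of the argument. Working modulo projective summands, we have $\tau X \cong \nu \Omega^2 X$ with $\nu$ the Nakayama functor. From the almost split sequence, with $U$ projective, one can compare a minimal projective presentation of $X$ with one of $\tau^{-1}U$ via the Horseshoe lemma, so that up to projective-injective terms the second syzygy of $X$ is controlled by the second syzygy of $\tau^{-1}U$. Applying $\Hom_A(D(A),-)$ and $\nu$ along the way, the condition $\Ext_A^i(D(A),\tau X)=0$ for $i=1,2$ translates, in the presence of the previously established vanishing $\Ext_A^i(D(A),U)=0$, into the requirement that the first two terms of a minimal injective coresolution of $A$ are projective-injective, that is, $\domdim(A)\geq 2$. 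The converse direction runs the same argument backwards: given $\domdim(A)\geq 2$, one lifts the projective-injective coresolution of $A$ to produce the required embedding of $X$ into an exact sequence of the form $0\to X\to P^0\to P^1$ with projective terms, using the long exact Ext sequences obtained by applying $\Hom_A(-,A)$ to the almost split sequence.

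The main obstacle is the middle term. Unlike $U$ and $\tau^{-1}U$, the module $X$ does not admit a clean description in terms of $U$ alone, and its syzygies mix contributions from both $U$ (already projective, but possibly not projective-injective) and from the projective cover of $\tau^{-1}U$. Carefully isolating which summands are projective-injective and which merely projective, and identifying precisely which Ext group governs the passage from $\domdim(R)\geq 4$ to $\domdim(A)\geq 2$, is the delicate point; this is where the two numerical invariants in the theorem genuinely interact rather than contribute independently.
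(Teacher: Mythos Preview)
Your treatment of the end term $\tau^{-1}U$ is correct and essentially identical to the paper's: reflexivity of $\tau^{-1}U$ for every indecomposable projective non-injective $U$ is equivalent to $\Ext_A^{1,2}(D(A),A)=0$, hence to $\domdim R\geq 4$ by Mueller. This part is fine.

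The gap is in the middle term, and specifically in the implication ``reflexive Auslander--Reiten sequences $\Rightarrow$ $\domdim A\geq 2$''. You assert that, via $\tau X\cong \nu\Omega^2 X$ and a Horseshoe comparison, the condition $\Ext_A^{1,2}(D(A),\tau X)=0$ ``translates'' into $\domdim A\geq 2$, but you never carry out this translation. It is not clear how knowing that $X$ is reflexive for each such almost split sequence produces the projectivity of $I_0(A)$ and $I_1(A)$; a Horseshoe argument relates the syzygies of $X$ to those of $U$ and $\tau^{-1}U$, but $U$ is already projective and $\tau^{-1}U$ has been handled, so no new information about $A$ itself emerges from that comparison alone. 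You flag this as ``the delicate point'', but the proposal stops precisely where the genuine work begins.

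The paper's argument for this implication (their Theorem~3.1, specialised to $n=2$) is quite different from what you sketch. One applies $\Hom_A(-,A)$ to the almost split sequence $0\to eA\to Y\to Z\to 0$ and uses the almost split property to identify the image of $Y^*\to (eA)^*\cong Ae$ as exactly $\rad Ae$. This produces an exact sequence $0\to Z^*\to Y^*\to \rad Ae\to 0$, and comparing it (after one more dualisation) with the original sequence via the canonical maps $Y\to Y^{**}$, $Z\to Z^{**}$ and the Snake Lemma yields $\Ext_A^i(\top Ae,A)=0$ for $i=0,1$. Since the simple modules $\top Ae$ with $eA$ non-injective are exactly the socle constituents that obstruct projectivity of $I_0(A)$ and $I_1(A)$, this gives $\domdim A\geq 2$. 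The key idea you are missing is this identification of $\operatorname{im}(Y^*\to Ae)$ with $\rad Ae$; without it there is no bridge from reflexivity of $X$ to information about the simple tops of projectives.

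For the converse direction (conditions $\Rightarrow$ reflexive AR sequences), your outline is workable but the paper's route is cleaner: once $\domdim A\geq 2$, one has $\TF_2=\Dom_2$, and $\Dom_2$ is closed under extensions by Horseshoe; since $U$ and $\tau^{-1}U$ lie in $\Dom_2$, so does $X$. This avoids any direct computation with $\tau X$.
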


In this article, we generalise Tachikawa's theorem to a higher version using $n$-torsion-free modules.
Namely, we define an almost split sequence of the form $0 \rightarrow U \rightarrow X \rightarrow \tau^{-1}(U) \rightarrow 0$  to be $n$-torsion-free if every module in the short exact sequence is $n$-torsion-free. An algebra is then said to have \emph{$n$-torsion-free Auslander-Reiten sequences} for a natural number $n \geq 1$ if every almost split sequence of the form $0 \rightarrow U \rightarrow X \rightarrow \tau^{-1}(U) \rightarrow 0$  is $n$-torsion-free for all indecomposable projective non-injective modules $U$.
Our main result is as follows:
\begin{thmintroduction}[\bfseries \ref{mainresult}]
	Let $A$ be a finite-dimensional algebra and $n \geq 1$ a natural number. Then $A$ has $n$-torsion-free Auslander-Reiten sequences if and only if the following two conditions are satisfied:
	\begin{enumerate}
		\item $\domdim(A) \geq n$
		\item $\domdim(R) \geq n+2$, when $R:=\End_A(A \oplus D(A))$.
	\end{enumerate}
\end{thmintroduction}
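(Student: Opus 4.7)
The plan is to test the $n$-torsion-free condition separately on each of the three modules $U$, $X$, and $\tau^{-1}(U)$ appearing in the almost split sequence, and to identify which of the two dominant-dimension hypotheses each of them corresponds to. Recall that $M$ is $n$-torsion-free iff $\Ext_A^i(D(A),\tau(M))=0$ for $1\leq i\leq n$. Since $U$ is projective, $\tau(U)=0$, so $U$ is trivially $n$-torsion-free and only $X$ and $\tau^{-1}(U)$ impose nontrivial conditions.

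For $\tau^{-1}(U)$, the fact that $U$ is indecomposable and non-injective means it has no injective direct summands, so $\tau\tau^{-1}(U)\cong U$. Hence $\tau^{-1}(U)$ is $n$-torsion-free iff $\Ext_A^i(D(A),U)=0$ for $1\leq i\leq n$. Letting $U$ range over all indecomposable projective non-injective summands of $A$, and using that $\Ext_A^{\geq 1}(D(A),E)=0$ whenever $E$ is injective, this is equivalent to $\Ext_A^i(D(A),A)=0$ for $1\leq i\leq n$. By the classical Morita--Tachikawa--M\"uller theorem applied to the generator-cogenerator $A\oplus D(A)$, this is precisely condition~(2). So ``$\tau^{-1}(U)$ is $n$-torsion-free for every indecomposable projective non-injective $U$'' is equivalent to condition~(2).

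The heart of the argument is to show that ``$X$ is $n$-torsion-free for every such $U$'' accounts for condition~(1), given condition~(2). The strategy is to give an explicit description of $\tau(X)$. Starting from a minimal projective presentation of $\tau^{-1}(U)$, applying the Nakayama functor $\nu=D\Hom_A(-,A)$, and splicing with the AR sequence via the horseshoe lemma, I expect to obtain a short exact sequence of the shape $0\to\tau(X)\to P\to\nu(U)\to 0$ with $P$ projective (up to controlled projective and injective summands). Since $\nu(U)$ is injective, its higher Ext against $D(A)$ vanishes, and the long exact Ext-sequence identifies $\Ext_A^i(D(A),\tau(X))$ with $\Ext_A^i(D(A),P)$ in degrees $i\geq 2$, together with a short exact sequence in degree $1$. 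As $U$ varies so that $P$ exhausts the indecomposable summands of $A$, the vanishing of these Ext groups in degrees $1\leq i\leq n$ translates precisely into the statement that the first $n$ terms of the minimal injective coresolution of every indecomposable projective are projective-injective, i.e.\ $\domdim(A)\geq n$.

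The main obstacle is the explicit description of $\tau(X)$ and the bookkeeping of projective and injective summands throughout the argument. Middle terms of Auslander--Reiten sequences can decompose into projective and non-projective parts in a nontrivial way, and $\tau$ annihilates the projective summands, so one has to control carefully which part of $X$ actually contributes to $\tau(X)$ and to what extent an ambiguity by injective summands is harmless for the Ext computation. Once the short exact sequence above is justified -- via the horseshoe lemma together with right-exactness of $\nu$ applied to minimal projective presentations, and the Auslander--Reiten formula to track injective/projective summands -- both directions of the theorem follow from the same chain of equivalences: necessity by reading off conditions~(1) and~(2) from the $n$-torsion-freeness of $X$ and of $\tau^{-1}(U)$ respectively, and sufficiency by running these equivalences in reverse.
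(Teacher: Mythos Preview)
Your treatment of $\tau^{-1}(U)$ is correct and matches the paper: its $n$-torsion-freeness amounts to $\Ext_A^i(D(A),U)=0$ for $1\le i\le n$, and summing over all indecomposable projective non-injective $U$ gives condition~(2) via Mueller's theorem. The gap is entirely in your handling of the middle term $X$.

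You assert that the vanishing of $\Ext_A^i(D(A),P)$ for projective $P$ and $1\le i\le n$ ``translates precisely'' into $\domdim A\ge n$. It does not: summed over indecomposable projectives this is $\Ext_A^i(D(A),A)=0$ for $1\le i\le n$, which by Mueller's theorem is condition~(2) again, not condition~(1). So even granting your short exact sequence $0\to\tau(X)\to P\to\nu(U)\to 0$ with $P$ projective---which you have not established, and which is doubtful: the horseshoe presentation $P_1\to P_0\oplus U\to X\to 0$ is typically non-minimal, and the interplay between projective summands of $X$ killed by $\tau$ and injective summands introduced by applying $\nu$ to a non-minimal presentation is not as harmless as you suggest---your analysis of $X$ would yield no constraint beyond condition~(2). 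Your argument, taken at face value, would prove that condition~(2) alone characterises $n$-torsion-free Auslander--Reiten sequences, making condition~(1) redundant. It is not redundant, and nothing in your outline produces it.

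The paper obtains condition~(1) by a completely different mechanism that does not compute $\tau(X)$. It applies $\Hom_A(-,A)$ to the almost split sequence $0\to eA\to Y\to Z\to 0$, uses the almost-split property to show that the image of $Y^{*}\to (eA)^{*}\cong Ae$ is exactly $\rad(Ae)$, and then compares the sequence with its double dual via the Snake Lemma. The $n$-torsion-freeness of $Y$ and $Z$---so that $Y\to Y^{**}$ and $Z\to Z^{**}$ are isomorphisms and $\Ext_A^i(Y^{*},A)=\Ext_A^i(Z^{*},A)=0$ for $1\le i\le n-2$---then forces $\Ext_A^i(\top(Ae),A)=0$ for $0\le i\le n-1$, whence $\domdim A\ge n$. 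In the forward direction the paper likewise avoids $\tau(X)$: condition~(1) gives $\Dom_n=\TF_n$, and since $\Dom_n$ is closed under extensions, $X\in\TF_n$ follows once $U,\tau^{-1}(U)\in\Dom_n$.
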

We give two applications of this generalisation of Tachikawa's result. The first application gives a new characterisation of the dominant dimension of gendo-symmetric algebras.
In \cite{FanKoe2}, Fang and Koenig defined \emph{gendo-symmetric algebras} as algebras of the form $\End_A(M)$, where $A$ is a symmetric algebra and $M$ a generator of $\rmod A$.
Gendo-symmetric algebras clearly generalise the important class of symmetric algebras but also contain many non-symmetric algebras such as Schur algebras $S(n,r)$ for $n \geq r$ and blocks of category $\mathcal{O}$, see for example \cite{KSX}. 
We apply our main theorem to give a new characterisation of the dominant dimension for gendo-symmetric algebras:
\begin{corintroduction}[\bfseries\ref{cor3dot3}] 
	Let $A$ be a gendo-symmetric algebra.
	Then $A$ has $n$-torsion-free Auslander-Reiten sequences if and only if it has dominant dimension at least $n+2$.
\end{corintroduction}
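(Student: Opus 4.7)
The plan is to reduce the statement to Theorem~\ref{mainresult}. That theorem asserts that $A$ has $n$-torsion-free Auslander--Reiten sequences if and only if $\domdim(A) \geq n$ and $\domdim(R) \geq n+2$, where $R = \End_A(A \oplus D(A))$. Since every gendo-symmetric algebra is a Morita algebra over a symmetric base and hence satisfies $\domdim(A) \geq 2$, the condition $\domdim(A) \geq n$ is automatic for $n \leq 2$ and is subsumed by $\domdim(A) \geq n+2$ for $n \geq 3$. So the corollary reduces to showing, for gendo-symmetric $A$,
\[ \domdim(A) \geq n+2 \iff \domdim(R) \geq n+2. \]

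I would prove that both sides of this equivalence coincide with the Ext-vanishing condition $\Ext^i_A(D(A), A) = 0$ for $1 \leq i \leq n$. For the right-hand side, $T := A \oplus D(A)$ is a generator--cogenerator of $A\m$, so by Mueller's theorem for generator--cogenerators $\domdim(R) \geq n+2 \iff \Ext^i_A(T,T) = 0$ for $1 \leq i \leq n$; projectivity of $A$ and injectivity of $D(A)$ kill three of the four Ext summands, collapsing this to $\Ext^i_A(D(A), A) = 0$ for $1 \leq i \leq n$. For the left-hand side, I would invoke the characterisation of dominant dimension for gendo-symmetric algebras (due to Fang--Koenig, or derivable from Mueller's theorem applied on the symmetric base $B = eAe$ with $M = eA$): $\domdim(A) \geq n+2 \iff \Ext^i_A(D(A), A) = 0$ for $1 \leq i \leq n$. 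Chaining these two equivalences yields the displayed one and hence, via the main theorem, the corollary.

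The main obstacle is establishing the second Ext-vanishing criterion for $\domdim(A)$ in the gendo-symmetric setting. This rests on identifying $\Ext^i_A(D(A), A)$ with $\Ext^i_B(M, M)$ via the Hom-adjunction along $\Hom_B(M, -) : B\m \to A\m$, an identification that requires the symmetric structure of $B$ (which makes the Nakayama functor on $B\m$ the identity and provides the bimodule identification for $D(A)$). Once this identification is secured, Mueller's formula $\domdim(A) \geq n+2 \iff \Ext^i_B(M,M) = 0$ for $1 \leq i \leq n$ translates to the $A$-side, and combining with the main theorem completes the proof.
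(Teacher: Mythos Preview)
Your proposal is correct and follows essentially the same route as the paper: apply Theorem~\ref{mainresult}, use Mueller's theorem to identify $\domdim(R)\geq n+2$ with $\Ext_A^i(D(A),A)=0$ for $1\leq i\leq n$, and then invoke the Fang--Koenig characterisation (stated as Theorem~\ref{fankoeresult} in the paper) to identify this Ext-vanishing with $\domdim(A)\geq n+2$. The ``main obstacle'' you flag is therefore already handled as a preliminary result, so no additional argument is needed.
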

Recall that an algebra $A$ is called an \emph{higher Auslander algebra} if $\gldim A \leq n \leq \domdim A$ for some $n \geq 2$.
Iyama's celebrated higher Auslander correspondence, see \cite{Iya}, showed that higher Auslander algebras are in bijective correspondence with cluster tilting modules.
Among algebras with cluster tilting modules, the most important class is the $d$-representation-finite algebras that can be seen as a generalisation of the representation-finite hereditary algebras, which are exactly the 1-representation-finite algebras.
An algebra $A$ is \emph{$d$-representation-finite} if it has global dimension at most $d$ and a $d$-cluster-tilting object.
A second application of our main result gives:
\begin{corintroduction}[\bfseries\ref{cor3dot7}]
	Let $B$ be a finite-dimensional algebra with a $d$-cluster tilting object $M$.
	Then the higher Auslander algebra $A=\End_B(M)$ has $d$-torsion-free Auslander-Reiten sequences if and only if $B$ is $d$-representation-finite.
\end{corintroduction}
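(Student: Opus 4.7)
I would proceed by combining Theorem~\ref{mainresult} with Iyama's higher Auslander correspondence. By Theorem~\ref{mainresult}, the algebra $A$ has $d$-torsion-free Auslander--Reiten sequences if and only if $\domdim A \geq d$ and $\domdim R \geq d+2$, where $R := \End_A(A \oplus D(A))$. Since $M$ is a $d$-cluster tilting $B$-module, Morita--Tachikawa applied to the generator-cogenerator $M$ together with the $\Ext$-vanishing built into the definition of $d$-cluster tilting yields $\domdim A \geq d+1$, so the first condition holds automatically; it therefore suffices to show that $\domdim R \geq d+2$ is equivalent to $\gldim B \leq d$.

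For this remaining equivalence, I would apply the Morita--Tachikawa theorem to the generator-cogenerator $X = A \oplus D(A)$. The condition $\domdim R \geq d+2$ becomes $\Ext^i_A(X, X) = 0$ for $1 \leq i \leq d$, and the three summands other than $\Ext^i_A(D(A), A)$ vanish trivially because $A$ is projective and $D(A)$ is injective. Thus $\domdim R \geq d+2$ reduces to the vanishing $\Ext^i_A(D(A), A) = 0$ for all $1 \leq i \leq d$.

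The final step is to interpret this $A$-side $\Ext$-vanishing in terms of $B$-module data. The Schur-type functor $\Hom_B(M, -)$ restricts to an equivalence $\add M \cong A\proj$ sending $M$ to $A$, and the structure of $D(A)$ is controlled, via the Nakayama functor, by $\nu_B M$ in $B\m$. Using the $d$-cluster tilting property together with a dimension-shift argument along a minimal $\add M$-resolution of $\nu_B M$, one identifies the groups $\Ext^i_A(D(A), A)$ for $1 \leq i \leq d$ with corresponding $B$-side $\Ext$-groups whose vanishing is equivalent, by Iyama's characterisation of $d$-representation-finiteness, to $\gldim B \leq d$. The main obstacle is precisely this last $\Ext$-translation, which requires a careful analysis of how the Nakayama functor $\nu_B$ interacts with the $d$-cluster tilting subcategory $\add M$ and how this interaction is reflected on the $A$-side via the Schur-type functor.
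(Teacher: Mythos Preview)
Your reduction, via Theorem~\ref{mainresult} and Mueller's theorem, to the single condition $\Ext_A^i(D(A),A)=0$ for $1\leq i\leq d$ is exactly the paper's argument. The only divergence is in the final step: rather than translating this vanishing back to the $B$-side by hand, the paper simply cites \citep[Theorem~1.20]{Iya2}, which states directly that for $A=\End_B(M)$ with $M$ a $d$-cluster tilting module, the algebra $B$ is $d$-representation-finite if and only if $\Ext_A^i(D(A),A)=0$ for $i=1,\ldots,d$. With that citation in hand the corollary is immediate.

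Your plan for the last step---identifying the $A$-side $\Ext$-groups with $B$-side data via the functor $\Hom_B(M,-)$ and an $\add M$-resolution of $\nu_B M$---is essentially a proposal to reprove Iyama's theorem. That is a legitimate route in principle, but your sketch does not carry it out: you correctly flag it as ``the main obstacle'' and leave both the precise identification $D(A)\cong \Hom_B(M,\nu_B M)$ and the dimension-shift argument unspecified. So the proposal is not wrong, but it trades an available citation for a substantial lemma that you have not yet proved; the cleaner fix is simply to invoke Iyama's result directly.
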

The modules $t(M):=\ker f_M$ which appear as kernels of the maps $f_M:M \rightarrow M^{**}$ for some module $M$ are called \emph{torsion modules}.
It is natural to try to use torsion modules to understand dominant dimension since torsion modules can be regarded as the "$0$-torsion-free" modules.
Our last main result generalises a theorem due to Reiten (\cite[Theorem 5.2]{Rei}) for finite-dimensional algebras, which gives a new formula for the dominant dimension using torsion modules:
\begin{thmintroduction}[\bfseries\ref{prop6dot13}]
	Let $A$ be a finite-dimensional algebra over a field. Assume that $A$ has dominant dimension at least one. Then,
	\begin{align*}
		\domdim A &=	\inf\{\grade(\Ext_A^i(M, A))\colon M\in A\m, i\geq 1 \} = \inf\{\grade(t(M))\colon M\in A\m \} \\&= \inf\{\grade(t(S))\colon S \text{ simple in } A \},
	\end{align*}
 where $t(M)=\ker f_M.$
\end{thmintroduction}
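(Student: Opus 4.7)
The plan is to show all four quantities coincide by combining the Auslander-Bridger transpose formalism with Reiten's theorem and an analysis of simple modules. Throughout, set $d := \domdim A$.

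For the middle equality, I would use the Auslander-Bridger exact sequence
\[
0 \to \Ext_A^1(\Tr M, A) \to M \xrightarrow{f_M} M^{**} \to \Ext_A^2(\Tr M, A) \to 0
\]
to identify $t(M) = \Ext_A^1(\Tr M, A)$ (the first Ext group of the transpose $\Tr M$, taken over the opposite algebra). Combined with the syzygy identity $\Ext_A^i(N, A) \cong \Ext_A^1(\Omega^{i-1} N, A)$ for $i \geq 1$, every higher Ext is realised as a first Ext, and every first Ext is a torsion submodule (via Auslander-Bridger on the opposite side). Together with $\domdim A = \domdim A^{op}$, this forces the two infima to coincide. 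The first equality $d = \inf\{\grade \Ext_A^i(M, A) : M \in A\m,\, i \geq 1\}$ is Reiten's theorem \cite[Theorem 5.2]{Rei}.

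For the third equality I would analyse simples directly. For any simple $S$, the inclusion $t(S) \subseteq S$ forces $t(S) \in \{0, S\}$, with $t(S) = 0$ iff $\Hom_A(S, A) \neq 0$, iff $S$ is a composition factor of $\soc A$. Since $\domdim A \geq 1$, $I^0 = E(A)$ is projective, and the indecomposable projective-injective modules are precisely the $I(S)$ with $S \subseteq \soc A$: an indecomposable summand of $A$ has simple socle lying in $\soc A$, and conversely any $I(S)$ with $S \subseteq \soc A$ is a summand of $E(A)$ hence projective. Because the first $d$ terms $I^0, \ldots, I^{d-1}$ of the minimal injective coresolution of $A$ are projective-injective, for every simple $S \notin \soc A$ the injective envelope $I(S)$ does not occur as an indecomposable summand of any $I^j$ with $j < d$; consequently $\Ext_A^j(S, A) = 0$ for $j < d$ and $\grade(S) \geq d$. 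By the very definition of $\domdim A = d$, the term $I^d$ contains at least one non-projective indecomposable summand $I(T)$; this $T$ then satisfies $T \notin \soc A$, $t(T) = T$, and $\Ext_A^d(T, A) \neq 0$, giving $\grade(t(T)) = \grade(T) = d$. Therefore $\inf\{\grade t(S) : S \text{ simple}\} = d$, matching the previously established values.

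The main technical obstacle is the $\geq$ direction of Reiten's theorem, i.e.\ the inequality $\grade(\Ext_A^i(M, A)) \geq d$ for every $M$ and every $i \geq 1$, which requires exploiting the projective-injective part of the minimal injective coresolution of $A$ to show vanishing of $\Ext_A^j$ of an Ext-module for $j < d$. Once this is in hand, the remaining arguments are clean consequences of the Auslander-Bridger identity and the combinatorics of indecomposable summands occurring in injective coresolutions of simple modules.
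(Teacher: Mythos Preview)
Your proposal is correct and aligns with the paper's strategy: both routes hinge on the Auslander--Bridger identification $t(M)\cong\Ext_A^1(\Tr M,A)$ to collapse the torsion and Ext infima, and both pin down the infimum on simples by locating a simple $T$ with $I(T)$ non-projective appearing in the minimal injective coresolution of $A$.

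There is one structural difference worth noting. For the upper bound $\inf\{\grade t(S)\}\leq d$ the paper (Proposition~4.3) proceeds by induction on $d$, at each step producing a simple in the socle of $\Omega^{-(d-1)}(A)$ whose injective hull is not projective and deriving a contradiction. Your argument is more direct: you pick a non-projective indecomposable summand $I(T)$ of $I^d$ and argue $\Ext_A^d(T,A)\neq 0$ in one shot. This is cleaner, but you should make explicit why $\Ext_A^d(T,A)\neq 0$: since $T$ lies in $\soc I^d=\soc\Omega^{-d}(A)$ by minimality, the inclusion $T\hookrightarrow\Omega^{-d}(A)$ gives a nonzero class, and it is not a coboundary because $\Hom_A(T,I^{d-1})=0$ (as $I^{d-1}$ is projective--injective and $T\notin\soc A$). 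For the $\geq$ direction you defer to Reiten; the paper instead supplies a self-contained argument (Proposition~4.1 and Corollary~4.2) via $\Hom_A(\Ext_A^i(M,A),P)\cong\Tor_i^A(M,P)=0$ for projective--injective $P$ followed by dimension shifting along the injective coresolution of $A$---exactly the mechanism you sketch in your final paragraph.
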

In \Thmref \ref{thm4dot6}, we show that these equalities also hold if we replace the assumption of $A$ having positive dominant dimension by 
requiring that all torsion modules have grade greater than one. As an application of \Thmref \ref{prop6dot13}, we describe in \Corref\ref{cor4dot7} the projective dimension of all torsion modules over higher Auslander algebras.

\section{Preliminaries}

We will start by presenting the notation and further concepts to be used throughout the paper. We refer for example to \cite{ARS} for an introduction to the representation theory of finite-dimensional algebras and more information on the concepts that we use here. Let $A$ be a finite-dimensional algebra over a field $k$. Throughout this paper, we assume that all modules are finitely generated, and all algebras are finite-dimensional algebras over a field unless stated otherwise. We write $A\m$ to denote the category of finitely generated left $A$-modules, whereas $\rmod A$ denotes the category of finitely generated right $A$-modules. By $D$ we mean the \emph{standard duality} $\Hom_k(-, k)\colon \rmod A \longleftrightarrow A\m$.
Given a morphism of $A$-modules $f$, we write $\im f$ to denote the image of $f$ and $\ker f$ denotes the kernel of $f$. We say that an $A$-module is a \emph{generator} if it contains all projective indecomposable $A$-modules as direct summands. Dually, the concept of \emph{cogenerator} is defined.

Given an $A$-module $M$, $\Tr M$ denotes the \emph{transpose of $M$}, while $\Omega^iM$ denotes the $i$-th syzygy of $M$ or the $(-i)$-th cosyzygy of $M$ if $i$ is a negative integer.
We denote by $\tau$ the \emph{Auslander-Reiten translation} $D\Tr$, whereas $\tau^{-1}$ denotes the \emph{inverse Auslander-Reiten translation} $\Tr D$. 
We write $\Omega^m(\rmod A)$ to denote the collection of all modules $M$ over $A$ that can be written as $M\simeq \Omega^m(X)$ for some $X\in \rmod A$ together with all projective $A$-modules. The collection of all $m$-torsion-free $A$-modules is denoted by $\TF_m$.  

The \emph{grade} of a non-zero $A$-module $M$, $\grade(M)$, is defined as $\inf \{ n \geq 0 \mid \Ext_A^n(M, A)\neq 0 \}$, where $\Ext_A^0(M, A)=\Hom_A(M,A)$.

A detailed exposition on the interconnections between grade, $n$-torsion-free modules and syzygies can be found in \cite{AB69}.

As it will be illustrated below, it makes sense to study these concepts together with the homological invariant known as the dominant dimension.
Given an $A$-module $M$, let $0\rightarrow M\rightarrow I_0\rightarrow I_1\rightarrow \cdots $ be a minimal injective coresolution of $M$. The module $M$ is said to have \emph{dominant dimension} $n$, denoted as $\domdim_A M$, if $I_0, \ldots, I_{n-1}$ are projective and $I_n$ is not. It has infinite dominant dimension if all terms $I_i$ are projective. We write $\domdim A=\domdim_A A$ and we denote by $\Dom_m$ the collection of all right $A$-modules having dominant dimension at least $m$, for a natural number $m$. This homological invariant is an effective tool to obtain ring theoretical properties of the underlying algebra and to simplify the study of its representation theory. In particular, all finite-dimensional algebras of dominant dimension at least two are precisely the endomorphism algebras of generator-cogenerators (see for example \cite{zbMATH03248955}). 

Below we list three results about the dominant dimension that are crucial for this work.

\begin{Theorem} \label{connectiontorsionfreedomdim}
	Let $A$ be an algebra of dominant dimension at least $n \geq 1$. 
	Then we have:
	$$\Dom_m= \Omega^m(\rmod A)=\TF_m$$ for all $1\leq m \leq n.$
\end{Theorem}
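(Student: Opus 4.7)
The plan is to prove the double equality $\Dom_m = \Omega^m(\rmod A) = \TF_m$ in two parts.

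For $\Dom_m = \Omega^m(\rmod A)$: the inclusion $\Dom_m \subseteq \Omega^m(\rmod A)$ is immediate, since truncating the minimal injective coresolution of $M \in \Dom_m$ at its $m$-th position yields an exact sequence $0 \to M \to Q_0 \to \cdots \to Q_{m-1} \to N \to 0$ with projective $Q_i$, exhibiting $M \simeq \Omega^m N$. For the reverse inclusion I would induct on $m$, using that the dominant dimension hypothesis on $A$ transfers to every finitely generated projective $A$-module (summand of a free module). Given $M = \Omega^m N$ with $0 \to M \to P \to \Omega^{m-1}N \to 0$, embed $P$ into its projective-injective envelope $Q$ so that $Q/P \simeq \Omega^{-1}P$ has $\domdim \geq n-1 \geq m-1$. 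A standard pushout produces $0 \to M \to Q \to L \to 0$ with $L$ fitting into $0 \to \Omega^{m-1}N \to L \to \Omega^{-1}P \to 0$; the inductive hypothesis together with the preservation of dominant dimension under extensions (horseshoe lemma) yields $L \in \Dom_{m-1}$, whence $M \in \Dom_m$.

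For $\Omega^m(\rmod A) = \TF_m$: the inclusion $\TF_m \subseteq \Omega^m(\rmod A)$ is the classical Auslander--Bridger statement that every $m$-torsion-free module is an $m$-th syzygy (see \cite{AB69}) and holds unconditionally. The reverse inclusion, which genuinely needs the dominant dimension hypothesis, is the main obstacle. By the first part it suffices to show $\Dom_m \subseteq \TF_m$. I would establish this by rewriting the $m$-torsion-free condition in its Auslander--Bridger form as $\Ext^i(\Tr M, A) = 0$ for $1 \leq i \leq m$, which is equivalent to the paper's definition via the standard duality $\Ext_A^i(DA, DX) \cong \Ext^i(X, A)$ for right $A$-modules $X$, and then verifying this vanishing by induction on $m$.

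The base case $m=1$ is clear, since any $M \in \Dom_1$ embeds into a projective module and is hence torsionless. For the inductive step, the main challenge is to upgrade $m$-torsion-freeness from $m-1$ to $m$: writing $M \in \Dom_m$ as $M = \Omega L$ for $L = Q/M \in \Dom_{m-1}$ with $Q$ projective-injective, one obtains a controlled shift between projective presentations of $M$ and $L$, and the projective-injectivity of $Q$ ensures that the Ext-vanishing on $\Tr L$ furnished by the inductive hypothesis transfers with a single dimension shift to the required vanishing on $\Tr M$. This transfer, which uses both that $Q^* = \Hom_A(Q, A)$ is projective (since $Q$ is projective) and that $\Hom_A(-, A)$ behaves well on the sequence because $Q$ is injective, is the technical heart of the argument and requires careful tracking of syzygies and their duals.
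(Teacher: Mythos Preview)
The paper's own proof is simply a pair of citations: \cite[Proposition~4]{MV} for the equality $\Dom_m = \Omega^m(\rmod A)$ and \cite[Theorem~0.1, Proposition~1.6]{AR} for $\Omega^m(\rmod A) = \TF_m$. Your proposal replaces this with a direct argument, and your treatment of the first equality is correct and essentially the standard one.

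The inductive step for $\Dom_m \subseteq \TF_m$, however, has a genuine gap. You assert that ``$\Hom_A(-,A)$ behaves well on the sequence because $Q$ is injective'', but injectivity of $Q$ controls the functor $\Hom_A(-,Q)$, not $\Hom_A(-,A)$. Applying $(-)^*$ to $0 \to M \to Q \to L \to 0$ yields
\[
0 \longrightarrow L^* \longrightarrow Q^* \longrightarrow M^* \longrightarrow \Ext_A^1(L,A) \longrightarrow 0,
\]
and nothing about $Q$ forces $\Ext_A^1(L,A)$ to vanish (nor is $Q^*$ injective in general: already for the Nakayama algebra $1\to 2\to 3$ with $\rad^2=0$, the module $e_1A$ is projective-injective while $Ae_1$ is simple and non-injective). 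Concretely, up to projective summands one has an exact sequence
\[
0 \longrightarrow \Ext_A^1(L,A) \longrightarrow \Tr L \longrightarrow \Omega\,\Tr M \longrightarrow 0,
\]
so the hoped-for dimension shift $\Ext^{i}(\Tr L,A) \cong \Ext^{i+1}(\Tr M,A)$ is obstructed precisely by $\Ext_A^1(L,A)$, about which the hypothesis $L \in \TF_{m-1}$ says nothing. The argument in \cite{AR} that the paper invokes does not run through a single short exact sequence with projective-injective middle term; it uses the dominant-dimension hypothesis in a more global way, and some such additional input is genuinely needed here.
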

\begin{proof}
The equality $\Dom_m= \Omega^m(\rmod A)$ can be found in \cite[Proposition 4]{MV} and the equality $\Omega^m(\rmod A)=\TF_m$ is a consequence of \citep[Theorem 0.1, Proposition 1.6]{AR}.
\end{proof}

The next theorem is often called Mueller's theorem.
\begin{Theorem} \label{muellertheo}
	Let $A$ be an algebra and $M$ a generator-cogenerator of $\rmod A$. Let $B:=\End_A(M)$. Then the dominant dimension of $B$ is equal to $\inf \{ i \geq 1 | \Ext_A^i(M,M) \neq 0 \} +1$.
\end{Theorem}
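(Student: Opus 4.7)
The plan is to analyze the functor $F = \Hom_A(M, -) \colon \rmod A \to \rmod B$. Three properties will be crucial: (a) $F$ is fully faithful, because $M$ is an $A$-generator, which yields the double centralizer $A \cong \End_B(M)$; (b) $F$ sends injective right $A$-modules to projective-injective right $B$-modules; (c) every projective-injective right $B$-module has the form $F(I)$ for some injective $I \in \rmod A$. For (b), projectivity uses the cogenerator condition $D(A) \in \add M$, so $F(I) \in \add F(M) = \add B$. Injectivity follows from the generator condition implying that ${}_BM$ is projective: a splitting $M^n \cong A \oplus X$ in $\rmod A$ yields ${}_BB^n \cong {}_BM \oplus \Hom_A(X, M)$ after applying $\Hom_A(-, M)$; then for $I = D(P)$ with $P \in A\m$ projective, $F(I) \cong D(M \otimes_A P)$ is the $k$-dual of a projective left $B$-module, hence injective. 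Property (c) combines (a) and (b).

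Let $n := \inf\{i \geq 1 \mid \Ext^i_A(M, M) \neq 0\}$. For the inequality $\domdim B \geq n + 1$, I would take a minimal injective coresolution $0 \to M \to I^0 \to I^1 \to \cdots$ in $\rmod A$ and apply $F$. Left exactness of $F$ together with the long exact sequence of $\Ext$ shows that the complex $0 \to B \to F(I^0) \to F(I^1) \to \cdots$ has cohomology $\Ext^i_A(M, M)$ at position $i \geq 1$, which vanishes for $i < n$. Splicing at $X := \im(F(I^{n-1}) \to F(I^n))$ produces the exact sequence
\[ 0 \to B \to F(I^0) \to \cdots \to F(I^{n-1}) \to F(I^n) \to F(I^n)/X \to 0 \]
exhibiting $n + 1$ projective-injective terms at the start of a partial injective coresolution of $B$.

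For the reverse $\domdim B \leq n + 1$, I would proceed by contradiction: assume $\domdim B \geq n + 2$, so the first $n + 2$ terms $J^0, \ldots, J^{n+1}$ of the minimal injective coresolution of $B$ are projective-injective. By (c), write $J^i = F(\widetilde{I}^i)$ with $\widetilde{I}^i$ injective in $\rmod A$, and by (a), lift the differentials and the inclusion $B \to J^0$ to a complex $0 \to M \to \widetilde{I}^0 \to \cdots \to \widetilde{I}^{n+1}$ in $\rmod A$. Set $\widetilde{X}^i := \im(\widetilde{I}^{i-1} \to \widetilde{I}^i)$ (with $\widetilde{X}^0 := M$) and $\widetilde{K}^i := \ker(\widetilde{I}^i \to \widetilde{I}^{i+1})$. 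I would then prove by induction on $i \in \{0, 1, \ldots, n\}$ that $\widetilde{X}^i = \widetilde{K}^i$ and $\Ext^i_A(M, M) = 0$; the vanishing $\Ext^n_A(M, M) = 0$ at the final step contradicts the choice of $n$.

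The base case $i = 0$ holds because $F(M) \hookrightarrow F(\widetilde{K}^0) = \ker(J^0 \to J^1) = B$ coincides with the original inclusion $B \hookrightarrow J^0$, making $F(M) \to F(\widetilde{K}^0)$ an isomorphism of $B$-modules; full faithfulness of $F$ then yields $M = \widetilde{K}^0$. For the inductive step with $i \geq 1$, the previously established exactness gives short exact sequences $0 \to \widetilde{X}^{j-1} \to \widetilde{I}^{j-1} \to \widetilde{X}^j \to 0$ for $j \leq i$, and dimension-shifting through the $F$-acyclic injective modules $\widetilde{I}^{j-1}$ yields $\Ext^1_A(M, \widetilde{X}^{i-1}) \cong \Ext^i_A(M, M)$. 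The associated long exact sequence then exhibits $F(\widetilde{X}^i)$ inside $J^i$ as fitting in a short exact sequence $0 \to \im(J^{i-1} \to J^i) \to F(\widetilde{X}^i) \to \Ext^i_A(M, M) \to 0$; but the complex property $F(\widetilde{X}^i) \subseteq F(\widetilde{K}^i) = \ker(J^i \to J^{i+1}) = \im(J^{i-1} \to J^i)$ forces the quotient to vanish, so $\Ext^i_A(M, M) = 0$ and $F(\widetilde{X}^i) = F(\widetilde{K}^i)$; applying $F$ fully faithful to the inclusion $\widetilde{X}^i \hookrightarrow \widetilde{K}^i$ then gives equality. The hypothesis $\domdim B \geq n + 2$ is used precisely to ensure $J^{n+1}$ (and hence $\widetilde{I}^{n+1}$) exists, so that $\widetilde{K}^n$ is defined at the last induction step; the main subtle point is recognising that the inclusion $F(\widetilde{X}^i) \subseteq F(\widetilde{K}^i)$ inside the concrete module $J^i$, combined with the long exact sequence, directly collapses the $\Ext$-term without any further input.
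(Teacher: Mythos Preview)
Your proof is correct and follows the classical line of argument for Mueller's theorem. The paper itself does not supply a proof at all: it simply cites the original reference \cite{zbMATH03248955}. Your write-up therefore goes well beyond what the paper provides, reproducing essentially the standard proof via the projectivisation functor $F=\Hom_A(M,-)$, the identification of projective--injective $B$-modules with $F(\text{inj}\,A)$, and the back-and-forth between an injective coresolution of $M$ and the minimal injective coresolution of $B$. The only cosmetic remark is that in direction~1 you should perhaps say explicitly that the minimal injective coresolution of $B$ is a direct summand of the one you construct, so that its first $n+1$ terms inherit projective--injectivity; and in (a), full faithfulness of $F$ really does use only the generator hypothesis (via ${}_BM$ being finitely generated projective and the double centraliser), as you claim.
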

\begin{proof}
	See \citep[Lemma 3]{zbMATH03248955}.
\end{proof}

\begin{Theorem} \label{fankoeresult}
	Let $A$ be a gendo-symmetric algebra. Then, $$\domdim A=\inf \{i \geq 1 | \Ext_A^i(D(A),A) \neq 0 \} +1.$$
	
\end{Theorem}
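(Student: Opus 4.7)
\emph{Plan.} The plan is to reduce the claimed formula to Mueller's theorem (Theorem~\ref{muellertheo}) via the defining presentation of a gendo-symmetric algebra. Write $A = \End_B(M)$ with $B$ a symmetric finite-dimensional algebra and $M$ a generator of $\rmod B$. Since $B$ is symmetric, projective and injective $B$-modules coincide, so $M$ is automatically a cogenerator of $\rmod B$, and Mueller's theorem gives
\[
\domdim A \;=\; 1 + \inf\{\, i \geq 1 \mid \Ext_B^i(M, M) \neq 0 \,\}.
\]
The task therefore reduces to showing that this infimum equals $\inf\{\, i \geq 1 \mid \Ext_A^i(D(A), A) \neq 0 \,\}$.

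The main tool I would use is the Schur functor $e\cdot(-) \colon A\m \to B\m$ associated to the idempotent $e \in A$ cutting out the $B$-summand of $M$. The key identifications, readily verified from the definition, are $eAe \cong B$, $Ae \cong M$ as left $A$-modules, and (using $D(B) \cong B$ as $B$-bimodules for $B$ symmetric) $eA \cong \Hom_B(M, B) \cong D(M)$ as right $A$-modules. Because $Ae \cong M$ turns out to be simultaneously projective and injective as a left $A$-module (indeed, $M \cong D(eA)$), the first $\domdim A$ terms of the minimal injective coresolution of $A$ lie in $\add Ae$. Combined with the elementary adjunction $\Hom_A(X, Ae) \cong \Hom_B(eX, eAe)$ for all $X \in A\m$, this yields the ``Schur reduction''
\[
\Ext_A^i(X, A) \;\cong\; \Ext_B^i(eX, eA) \qquad \text{for every } X \in A\m, \ 0 \leq i \leq \domdim A - 1.
\]
Applying this with $X = D(A)$, using $eD(A) \cong D(Ae) \cong D(M)$ and $eA \cong D(M)$ as left $B$-modules, and invoking the duality $\Ext_B^i(D(M), D(M)) \cong \Ext_B^i(M, M)$ available for the symmetric algebra $B$, one obtains $\Ext_A^i(D(A), A) \cong \Ext_B^i(M, M)$ for every $0 \leq i \leq \domdim A - 1$. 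Comparing infima with Mueller's formula above then yields the theorem.

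\emph{Main obstacle.} The main technical challenge will be the Schur reduction in the stated range. Concretely, one must check that at each degree $i \leq \domdim A - 1$ the term $I^i$ of the minimal injective coresolution of $A$ really does lie in $\add Ae$, so that $e\cdot(-)$ interacts correctly with $\Hom_A(-, I^i)$, and then propagate the resulting module-level isomorphism to $\Ext$ groups via the compatible differentials. The remaining parts of the argument are bookkeeping about left/right actions, efficiently organised through the bimodule identifications of $Ae$ and $eA$ listed above.
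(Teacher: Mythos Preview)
The paper does not give a proof of this statement at all; it simply cites \cite[Proposition 3.3]{FanKoe2}. So there is no ``paper's proof'' to compare against, and your outline is in fact a sketch of how one would establish the cited result.

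Your overall strategy---reduce to Mueller's theorem via the Schur functor $e(-)$ for the idempotent $e$ with $eAe\cong B$---is the right one, and your bimodule identifications ($Ae\cong M$, $eA\cong D(M)$, $Ae\cong D(eA)$) are correct. There is, however, a subtle gap in the mechanism you describe. You propose to obtain $\Ext_A^i(D(A),A)\cong\Ext_B^i(M,M)$ for $0\le i\le \domdim A-1$ by using that the first $\domdim A$ terms $I^0,\dots,I^{d-1}$ of the minimal injective coresolution of $A$ lie in $\add Ae$. But comparing the complexes $\Hom_A(D(A),I^\bullet)$ and $\Hom_B(eD(A),eI^\bullet)$ only via this fact gives an isomorphism on cohomology in degrees $\le d-2$ and merely an \emph{injection} in degree $d-1$: the kernel computation at degree $d-1$ involves the map into $I^d$ (or equivalently into $\Omega^{-d}A$), which need not lie in $\add Ae$. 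Since $d-1$ is precisely the degree where Mueller's theorem gives non-vanishing of $\Ext_B^{d-1}(M,M)$, this injection is not enough to conclude $\Ext_A^{d-1}(D(A),A)\neq 0$.

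The fix is to use the gendo-symmetric hypothesis more fully. Two observations close the gap cleanly: first, $e(-)$ sends \emph{every} injective $A$-module to an injective $B$-module, since $eD(Q)\cong D(Qe)$ for any projective right $A$-module $Q$; second, the gendo-symmetric bimodule isomorphism $D(A)\cong Ae\otimes_B eA$ yields a natural adjunction isomorphism $\Hom_A(D(A),Y)\cong\Hom_B(eD(A),eY)$ for \emph{all} $Y\in A\m$, not only for $Y\in\add Ae$. Together these give $\Ext_A^i(D(A),A)\cong\Ext_B^i(D(M),D(M))\cong\Ext_B^i(M,M)$ in every degree $i\ge 0$, and the comparison with Mueller's formula is then immediate.
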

\begin{proof}
	See \cite[Proposition 3.3]{FanKoe2}.
\end{proof}

For further background material on the dominant dimension we refer for example to \cite{Yam}. We recall the following well-known result that allows us to determine extensions of right modules via extension of left modules and vice-versa.

\begin{Lemma}
    Let $A$ be a finite-dimensional algebra over a field. Then, for every $i\geq 0$ and $M, N\in A\m$ we have 
    $\Ext_A^i(M, N)\simeq \Ext_A^i(DN, DM)$.
\end{Lemma}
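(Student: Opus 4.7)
The plan is to use that the standard duality $D=\Hom_k(-,k)$ is a contravariant equivalence between $A\m$ and $\rmod A$, which in particular sends projectives to injectives, is exact, and satisfies the natural isomorphism $\Hom_A(X,Y)\simeq \Hom_A(DY,DX)$ for all $X,Y\in A\m$. Computing Ext on both sides via appropriate resolutions and comparing through this duality will yield the claim.

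First, I would fix a projective resolution $\cdots\to P_1\to P_0\to M\to 0$ of $M$ in $A\m$. Applying the exact functor $D$ term-by-term produces an exact sequence $0\to DM\to DP_0\to DP_1\to \cdots$ in $\rmod A$; since $D$ maps projective left $A$-modules to injective right $A$-modules, this is an injective coresolution of $DM$ in $\rmod A$. Hence $\Ext_A^i(DN,DM)$ is computed as the $i$-th cohomology of the complex $\Hom_A(DN, DP_\bullet)$, while $\Ext_A^i(M,N)$ is computed as the $i$-th cohomology of $\Hom_A(P_\bullet, N)$.

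Second, I would use the duality $D$ to obtain, for each $j\geq 0$, the natural isomorphism of $k$-vector spaces
\[
\Hom_A(P_j, N)\xrightarrow{\;\sim\;} \Hom_A(DN, DP_j),\qquad f\longmapsto Df,
\]
which follows immediately from $D$ being a contravariant equivalence. Naturality in $P_j$ promotes this family of isomorphisms to an isomorphism of cochain complexes $\Hom_A(P_\bullet, N)\simeq \Hom_A(DN, DP_\bullet)$. Taking $i$-th cohomology yields $\Ext_A^i(M,N)\simeq \Ext_A^i(DN,DM)$, as desired.

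There is no substantial obstacle here; the only point requiring a little care is verifying that $D$ sends the chosen projective resolution of $M$ to a genuine injective coresolution of $DM$ in $\rmod A$ (i.e.\ that $D$ is exact, that $DP_j$ is injective as a right $A$-module, and that exactness at $DM$ persists), but all three facts follow directly from $D$ being a $k$-linear contravariant equivalence between $A\m$ and $\rmod A$. Once this is in place, the remainder is a formal cohomology computation.
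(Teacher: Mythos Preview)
Your argument is correct and is the standard one: the paper itself states this lemma without proof, treating it as well known. Your approach via the contravariant equivalence $D$ (turning a projective resolution of $M$ into an injective coresolution of $DM$ and comparing Hom-complexes) is exactly how one would justify it.
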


\section{Algebras with $n$-torsion-free Auslander-Reiten sequences}

Our aim in this section is to characterise the existence of torsion-free Auslander--Reiten sequences in terms of dominant dimension.

\begin{Theorem} \label{nTFimpliesdomdim}
	Let $A$ have $n$-torsion-free Auslander-Reiten sequences for some $n\geq 1$. Then $\domdim A\geq n$.
\end{Theorem}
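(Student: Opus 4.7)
The plan is to translate the $n$-torsion-freeness of the Auslander--Reiten sequences into an $\Ext$-vanishing condition involving $D(A)$, and then to recover the dominant dimension from this vanishing.

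First, for each indecomposable projective non-injective module $U$, the $n$-torsion-freeness of $\tau^{-1}(U)$ reads $\Ext_A^i(D(A), \tau\tau^{-1}(U)) = 0$ for $1 \leq i \leq n$. Since $U$ is non-injective, $\tau\tau^{-1}(U) \simeq U$, and so $\Ext_A^i(D(A), U) = 0$ for $1 \leq i \leq n$. For indecomposable projective-injective $U$, the same groups vanish trivially because $U$ is injective. Summing over the indecomposable projective summands of $A$ therefore yields $\Ext_A^i(D(A), A) = 0$ for $1 \leq i \leq n$.

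Next I would convert this $\Ext$-vanishing into $\domdim A \geq n$. Via the duality identifying the minimal injective coresolution of $A$ on one side with the minimal projective resolution $\cdots \to R_1 \to R_0 \to D(A) \to 0$ on the opposite side, $\domdim A \geq n$ amounts to $R_0, \ldots, R_{n-1}$ being injective. Applying $(-)^* = \Hom_A(-, A)$ to this projective resolution yields a complex whose $i$th cohomology is $\Ext_A^i(D(A), A)$, and the vanishings just established force enough exactness of the resulting complex so that, via the standard equivalence between projective-injective modules on the two sides, one can pin down $R_0, \ldots, R_{n-1}$ as projective-injective.

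The main obstacle is this final translation. Its cleanest form needs $\domdim A \geq 1$ as a baseline, which is not directly part of the $\Ext$-vanishing and must be extracted separately: one can use the $1$-torsion-freeness of $\tau^{-1}(U)$ (or of the middle term $X$, which is torsionless and hence embeds into a projective) to control the injective envelopes of the indecomposable projective non-injective modules and thus obtain $\domdim A \geq 1$. With this baseline in hand, the above Ext-vanishing upgrades to the claimed dominant-dimension bound.
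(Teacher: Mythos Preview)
Your argument has a genuine gap at the ``translation'' step. The vanishing $\Ext_A^i(D(A),A)=0$ for $1\le i\le n$ that you extract from the $n$-torsion-freeness of $\tau^{-1}(U)$ is, by Mueller's theorem, exactly condition~(2) of Theorem~\ref{mainresult} (namely $\domdim R\ge n+2$). But Theorem~\ref{mainresult} lists $\domdim A\ge n$ and $\domdim R\ge n+2$ as two \emph{independent} conditions; neither implies the other in general. Your proposed mechanism---applying $(-)^*$ to a minimal projective resolution $\cdots\to R_1\to R_0\to D(A)\to 0$ and using exactness of the resulting complex---only gives you exactness information, not that the $R_i$ are injective. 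Even granting the baseline $\domdim A\ge 1$ (which you only sketch), there is no argument offered for why the Ext-vanishing upgrades this to $\domdim A\ge n$, and indeed it does not. Notice also that your argument uses only the $n$-torsion-freeness of the right-hand term $\tau^{-1}(U)$; the hypothesis on the middle term $X$ is essentially discarded.

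The paper's proof proceeds quite differently and uses the full hypothesis. One applies $(-)^*$ to the almost split sequence $0\to eA\to Y\to Z\to 0$ itself and identifies the image of $Y^*\to (eA)^*\cong Ae$ as $\rad Ae$ (this is where the almost-split property is used). Comparing the original sequence with its double dual via the canonical maps $Y\to Y^{**}$ and $Z\to Z^{**}$ and applying the Snake Lemma, the torsionlessness of $Y$ forces $(\top Ae)^*=0$, and for $n\ge 2$ the reflexivity of $Y$ and $Z$ forces $\Ext_A^1(\top Ae,A)=0$ and then, combined with the vanishing $\Ext_A^i(Y^*,A)=\Ext_A^i(Z^*,A)=0$ for $1\le i\le n-2$, gives $\Ext_A^i(\top Ae,A)=0$ for $0\le i\le n-1$. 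This is exactly $\domdim A\ge n$. The essential input you are missing is the $n$-torsion-freeness of the \emph{middle} term $Y$, which enters through its double-dual map.
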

\begin{proof}
	Assume that $eA$ is not injective for some idempotent $e\in A$. Recall that the socle of the injective module $D(eA)$ is $\top Ae$.
 We aim to show that for $i=0, \ldots, n-1$ we have $\Ext_A^i(\top Ae, A)=0$.
 
	Observe that $\Ext_A^i(\top Ae, A)=\Hom_A(\top Ae, E_i)$, where $0\rightarrow A\rightarrow E_0\rightarrow E_1\rightarrow \cdots$ is a minimal injective coresolution of $A$ (see for instance \citep[Lemma 1]{zbMATH03620079}). 
	Indeed, the maps $\Hom_A(S, E_i)\rightarrow \Hom_A(S, \Omega^{-(i+1)}(A))$ are always zero whenever $S$ is simple. Fix $S$ a simple module. Pick a non-zero map $f\in \Hom_A(S, E_i)$ and denote by $g$ the surjective map ${E_i\rightarrow \Omega^{-(i+1)}(A)}$. By the minimality, $f(S)\cap \Omega^{-i}(A)\neq 0$ (otherwise $f(S)=0$ since $\Omega^{-i}(A)\rightarrow E_i$ is an injective hull). Thus $\im f$ is contained in $\Omega^{-i}(A)$ because $S$ is simple. Thus, $g\circ f=0$.
	
	Hence, $\Ext_A^i(\top Ae, A)\simeq \Hom_A(\top Ae, E_i)$ for all $i\geq 0$. 
	
	Let \begin{align}
		0\rightarrow eA\rightarrow Y\rightarrow Z\rightarrow 0\label{eq1}
	\end{align}be an almost split sequence. By assumption, it is $n$-torsion-free. 
	Observe that for any module $X$ which is $n$-torsion-free we obtain that 
	\begin{align}
		0=\Ext_A^i(DA, \tau X)=\Ext_A^i(\Tr X, DDA)=\Ext_A^i(\Tr X, A), \quad i=1, \ldots, n.
	\end{align}
	Moreover, consider the minimal projective presentation for $X$, $P_1\rightarrow P_0\rightarrow X\rightarrow 0$. Applying $(-)^*:=\Hom_A(-, A)$ yields the exact sequence
	\begin{align}
		0\rightarrow X^*\rightarrow P_0^*\rightarrow P_1^*\rightarrow \Tr X\rightarrow 0. \label{eq3}
	\end{align}Hence applying $\Hom_A(-, A)$ yields $\Ext_A^i(X^*, A)\simeq \Ext_A^{i+2}(\Tr X, A)$ for all $i>0$. Thus, $\Ext_A^i(X^*, A)=0$ for $1\leq i\leq n-2$ and every $n$-torsion-free module $X$. Therefore, \begin{align}
		\Ext_A^i(Y^*, A)=\Ext_A^i(Z^*, A)=0, \quad 1\leq i\leq n-2. \label{eq4}
	\end{align}
	
	Applying $\Hom_A(-, A)$ to (\ref{eq1}) we obtain the exact sequence $$0\rightarrow \Hom_A(Z, A)\rightarrow \Hom_A(Y, A)\rightarrow \Hom_A(eA, A).$$ Observe that $\im(\Hom_A(Y, A)\rightarrow \Hom_A(eA, A))\subset \Hom_A(eA, A)\simeq Ae$. Since (\ref{eq1}) is almost split, every map $eA\rightarrow wA$ factors through $Y$ for every primitive idempotent $w\neq e$. Given $f\in \Hom_A(eA, A)$ so that $f(e)\in \rad Ae$, the morphism $eA\xrightarrow{f}A \twoheadrightarrow eA$ is not an isomorphism, and therefore it factors through $Y$. In such case, all the maps $eA \xrightarrow{f}A\twoheadrightarrow wA$ factors through $Y$ for all primitive idempotents $w$, and so $f$ factors through $Y$. This shows that $\rad Ae\subset  \im(\Hom_A(Y, A)\rightarrow \Hom_A(eA, A))$. Since $\rad(Ae)$ is the maximal submodule of $Ae$, this must be an equality, otherwise (\ref{eq1}) would be a split exact sequence.
	We obtained an exact sequence
	\begin{align}
		0\rightarrow \Hom_A(Z, A)\rightarrow \Hom_A(Y, A)\rightarrow \rad Ae\rightarrow 0,\label{eq2}
	\end{align}as left $A$-modules and the map $Y^*\rightarrow (eA)^*\simeq Ae$ factors through $\rad Ae$. Consider the exact sequence $0\rightarrow \rad Ae\rightarrow Ae\rightarrow \top Ae\rightarrow 0$.  Applying $(-)^*:=\Hom_A(-, A)$ yields the exact sequence
	\begin{align}
		0\rightarrow (\top Ae)^*\rightarrow (Ae)^*\rightarrow (\rad Ae)^*\rightarrow \Ext_A^1(\top Ae, A)\rightarrow 0\label{eq5}
	\end{align} and $\Ext_A^{i+1}(\top Ae, A)\simeq \Ext_A^i(\rad Ae, A)$ for all $i>0$. Further, the map $(eA)^{**}\simeq (Ae)^*\rightarrow Y^{**}$ factors through $(\rad Ae)^*$. Hence,
	applying $\Hom_A(-, A)$ to (\ref{eq2}) yields the following commutative diagram with exact rows
	\begin{equation}
		\begin{tikzcd}
			0 \arrow[r] &eA \arrow[r] \arrow[d, "\simeq"] & Y \arrow[r] \arrow[dd]& Z \arrow[r] \arrow[dd]& 0 \\
			& (Ae)^* \arrow[d]& & \\
			0 \arrow[r] & (\rad Ae)^* \arrow[r] & Y^{**} \arrow[r] & Z^{**}\arrow[r] &\Ext_A^1(\rad Ae, A)
		\end{tikzcd}\label{eq6}
	\end{equation}
	If $n\geq 1$, then the map $Y\rightarrow Y^{**}$ is injective. In such a case, the commutativity of (\ref{eq6}) yields that the map ${(Ae)^*\rightarrow (\rad Ae)^*}$ is also injective  and thus by (\ref{eq5}),  $\Hom_A(\top Ae, A)=0$. 
	If $n\geq 2$, then both maps $Y\rightarrow Y^{**}$ and $Z\rightarrow Z^{**}$ are bijective. By the Snake Lemma, $(Ae)^*\rightarrow (\rad Ae)^*$ is also bijective and the map $Y^{**}\rightarrow Z^{**}$ is therefore surjective. By (\ref{eq5}), $\Ext_A^1(\top Ae, A)=0$. Further, the surjectivity of $Y^{**}\rightarrow Z^{**}$ and applying $\Hom_A(-, A)$ to (\ref{eq2}) yields the long exact sequence
\begin{equation}
    \begin{tikzcd}
  0 \rar & \Ext_A^1(\rad Ae, A) \rar
             \ar[draw=none]{d}[name=X, anchor=center]{}
    & \Ext_A^1(Y^*, A) \rar & \Ext_A^1(Z^*, A) \ar[rounded corners,
            to path={ -- ([xshift=2ex]\tikztostart.east)
                      |- (X.center) \tikztonodes
                      -| ([xshift=-2ex]\tikztotarget.west)
                      -- (\tikztotarget)}]{dll}[at end]{} \\      
 & \Ext_A^2(\rad Ae, A) \rar & \Ext_A^2(Y^*, A) \rar & \cdots
\end{tikzcd}. \label{eq8}
\end{equation}

	By (\ref{eq4}) and (\ref{eq8}), we obtain that $\Ext_A^i(\rad Ae, A)=0$ for $1\leq i\leq n-2$. Hence, we obtain also that  $\Ext_A^{i}(\top Ae, A)=0$ for $i=0, \ldots, n-1$.
	
	Therefore, $\top Ae$ is not in the socle of $E_i$ for $0\leq i\leq n-1$ and so all $E_i$ are projective-injective for $0\leq i\leq n-1$. This means that $\domdim A\geq n$.
\end{proof}

\begin{Theorem} \label{mainresult}
	Let $A$ be a finite-dimensional algebra and $n \geq 1$ a natural number. Then $A$ has $n$-torsion-free Auslander-Reiten sequences if and only if the following two conditions are satisfied:
	\begin{enumerate}
		\item $\domdim(A) \geq n$
		\item $\domdim(R) \geq n+2$, when $R:=\End_A(A \oplus D(A))$.
	\end{enumerate}
\end{Theorem}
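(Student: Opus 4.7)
My plan is to reduce both conditions of the theorem to Ext vanishings via Mueller's theorem, and then treat the two directions in parallel. Applied to the generator-cogenerator $A \oplus D(A)$ of $\rmod A$, Theorem \ref{muellertheo} combined with the vanishings $\Ext_A^i(A,-) = 0 = \Ext_A^i(-, D(A))$ for $i \geq 1$ yields $\domdim R = \inf\{i \geq 1 : \Ext_A^i(D(A), A) \neq 0\} + 1$. Hence condition (2) is equivalent to $\Ext_A^i(D(A), A) = 0$ for $i = 1, \ldots, n$; decomposing $A = \bigoplus eA$ and noting that such Ext vanishes automatically when $eA$ is injective, this amounts to $\Ext_A^i(D(A), eA) = 0$ for $i = 1, \ldots, n$ for every primitive idempotent $e$ with $eA$ non-injective.

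For the forward direction, condition (1) is Theorem \ref{nTFimpliesdomdim}. For (2), fix a primitive idempotent $e$ with $eA$ non-injective. The hypothesis gives that $Z_e := \tau^{-1}(eA)$ is $n$-torsion-free, i.e.\ $\Ext_A^i(D(A), \tau Z_e) = 0$ for $i = 1, \ldots, n$. Since $eA$ is indecomposable non-injective, $\tau Z_e = eA$, which gives exactly the required vanishing.

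For the backward direction, fix a primitive idempotent $e$ with $eA$ non-injective and consider the AR sequence $0 \to eA \to Y_e \to Z_e \to 0$. The module $eA$ is trivially $n$-torsion-free, and $Z_e$ is $n$-torsion-free by reversing the forward argument using (2). The main task is to show $Y_e$ is $n$-torsion-free. For this I reuse the computation from the proof of Theorem \ref{nTFimpliesdomdim}: the short exact sequence $0 \to Z_e^* \to Y_e^* \to \rad(Ae) \to 0$ derived there, together with $\Tr(eA) = 0$, fits into the Auslander-Bridger six-term transpose exact sequence and produces the short exact sequence of left $A$-modules
\[0 \to \top(Ae) \to \Tr Z_e \to \Tr Y_e \to 0,\]
where $\top(Ae)$ is the cokernel of $Y_e^* \to (eA)^* = Ae$. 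Applying $\Hom_A(-,A)$ gives the exact piece
\[\Ext_A^{i-1}(\top(Ae), A) \to \Ext_A^i(\Tr Y_e, A) \to \Ext_A^i(\Tr Z_e, A)\]
for each $i \geq 1$. By condition (1), the argument in the proof of Theorem \ref{nTFimpliesdomdim} gives $\Ext_A^j(\top(Ae), A) = 0$ for $j = 0, \ldots, n-1$, and the $n$-torsion-freeness of $Z_e$ yields $\Ext_A^i(\Tr Z_e, A) = 0$ for $i = 1, \ldots, n$. Combining, $\Ext_A^i(\Tr Y_e, A) = 0$ for $i = 1, \ldots, n$, so $Y_e$ is $n$-torsion-free.

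The main technical step is the derivation of the exact sequence $0 \to \top(Ae) \to \Tr Z_e \to \Tr Y_e \to 0$, which I expect to prove by an explicit horseshoe argument: taking the projective presentation $eA \to 0$ for the projective module $eA$, a minimal presentation $P_1^Z \to P_0^Z \to Z_e \to 0$, and the induced presentation $P_1^Z \to eA \oplus P_0^Z \to Y_e \to 0$, then dualising shows $\Tr Y_e = \Tr Z_e / \im(Ae \to \Tr Z_e)$ with the kernel of $Ae \to \Tr Z_e$ being $\im(Y_e^* \to Ae) = \rad(Ae)$, yielding the claimed short exact sequence.
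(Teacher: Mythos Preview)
Your proof is correct, and your ``forward'' direction (assuming $n$-torsion-free Auslander--Reiten sequences and deducing (1) and (2)) coincides with the paper's argument. The genuine difference is in the backward direction, specifically in verifying that the middle term $Y_e$ is $n$-torsion-free. The paper argues conceptually: condition (1) together with Theorem~\ref{connectiontorsionfreedomdim} gives $\TF_n = \Dom_n$, and $\Dom_n$ is closed under extensions by the Horseshoe lemma, so $Y_e \in \Dom_n = \TF_n$ once $eA$ and $Z_e$ are known to lie there. You instead work directly with transposes: from the Horseshoe presentation you extract the six-term exact sequence $0 \to Z_e^* \to Y_e^* \to Ae \to \Tr Z_e \to \widetilde{\Tr} Y_e \to 0$, identify the cokernel of $Y_e^* \to Ae$ as $\top(Ae)$ via the computation in the proof of Theorem~\ref{nTFimpliesdomdim}, and then reduce the vanishing of $\Ext_A^i(\Tr Y_e,A)$ to that of $\Ext_A^j(\top(Ae),A)$, which is equivalent to $\domdim A \geq n$ through the identity $\Ext_A^j(\top Ae,A) \cong \Hom_A(\top Ae, E_j)$ proved there. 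One small caveat: since the Horseshoe presentation of $Y_e$ need not be minimal, the rightmost term is a priori $\Tr Y_e \oplus Q$ for some projective $Q$ rather than $\Tr Y_e$ on the nose; this is harmless for the Ext argument but should be said. Your route has the advantage of being self-contained within the almost-split computations and avoids invoking the equality $\TF_m = \Dom_m$; the paper's route is shorter and foregrounds the structural fact that $\Dom_n$ is extension-closed.
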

\begin{proof}
	Assume first that $A$ is an algebra of dominant dimension at least $n$ and such that $R=\End_A(A \oplus D(A))$ has dominant dimension at least $n+2$.
	In case $A$ is selfinjective, every module is $n$-torsion-free for any $n \geq 1$ and there is nothing to show. Thus assume that $A$ is not selfinjective and thus there exists at least one indecomposable projective non-injective $A$-module.
	Let $P$ be an indecomposable projective non-injective $A$-module and 
	$$0 \rightarrow P \rightarrow X \rightarrow \tau^{-1}(P) \rightarrow 0$$
	the associated almost split sequence. 
	To see that $\tau^{-1}(P)$ is $n$-torsion-free just note that for all $i$ with $1 \leq i \leq n$ we have $\Ext_A^i(D(A),\tau(\tau^{-1}(P))= \Ext_A^i(D(A),P)=0$ since this module is a direct summand of $\Ext_A^i(D(A),A)=0$ by assumption.
	Since $A$ has dominant dimension at least $n$, we have $\Dom_n=\TF_n$ by \Thmref\ref{connectiontorsionfreedomdim}. 
	Thus the module $\tau^{-1}(P)$ also has dominant dimension at least $n$. As a projective module $P$ also has dominant dimension at least $n$ since we assume that $A$ has dominant dimension at least $n$.
	Now by the Horseshoe lemma, the subcategory $\Dom_n$ is closed under extensions. Thus with $P$ and $\tau^{-1}(P)$, also the middle term $X$ in the almost split sequence has dominant dimension at least $n$. Using again that $\Dom_n=\TF_n$, we conclude that $X$ is also $n$-torsion-free and thus $A$ has $n$-torsion-free Auslander-Reiten sequences. \newline
	Now assume that $A$ has $n$-torsion-free Auslander-Reiten sequences. Again, the result is clear in case $A$ is selfinjective, since then $A$ has infinite dominant dimension and $R$ is Morita-equivalent to $A$ and thus also has infinite dominant dimension. Assume now that $A$ is not selfinjective and let $P$ be an indecomposable projective non-injective module. By assumption, $\tau^{-1}(P)$ is $n$-torsion-free, which is equivalent to $\Ext_A^i(D(A),P)=\Ext_A^i(D(A),\tau(\tau^{-1}(P))=0$ for all $i=1,...,n$. This gives that also $\Ext_A^i(D(A),A)=\bigoplus\limits_{P}^{}{\Ext_A^i(D(A),P)}=0$ for all $i=1,...,n$, where the direct sum is over all indecomposable projective $A$-modules. By Mueller's theorem \ref{muellertheo} this shows $\domdim R \geq n+2$.
	Now by \Thmref\ref{nTFimpliesdomdim}, $A$ also has dominant dimension at least $n$ and this finishes the proof.
\end{proof}

As a corollary of our main result, we can get the following new characterisation of the dominant dimension of a gendo-symmetric algebra:
\begin{Cor}\label{cor3dot3}
	Let $A$ be a gendo-symmetric algebra and $n \geq 1$ a natural number. Then $A$ has dominant dimension at least $n+2$ if and only it has $n$-torsion-free Auslander-Reiten sequences.
	
\end{Cor}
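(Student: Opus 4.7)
The plan is to deduce the corollary directly from Theorem \ref{mainresult} by showing that, for a gendo-symmetric algebra $A$, the two dominant dimensions $\domdim A$ and $\domdim R$ appearing in the statement of that theorem actually coincide. Once this equality is established, the pair of conditions $\domdim A\geq n$ and $\domdim R\geq n+2$ collapses to the single condition $\domdim A\geq n+2$, which is exactly what the corollary claims.

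To prove $\domdim R = \domdim A$, I would apply Mueller's theorem (\Thmref\ref{muellertheo}) to the generator-cogenerator $M:=A\oplus D(A)$, obtaining
\[
\domdim R = \inf\{i\geq 1 : \Ext_A^i(M,M)\neq 0\}+1.
\]
Expanding $\Ext_A^i(M,M)$ as a direct sum of four terms and using that $A$ is projective and $D(A)$ is injective, the summands $\Ext_A^i(A,A)$, $\Ext_A^i(A,D(A))$ and $\Ext_A^i(D(A),D(A))$ all vanish for $i\geq 1$, leaving only $\Ext_A^i(D(A),A)$. Hence
\[
\domdim R = \inf\{i\geq 1 : \Ext_A^i(D(A),A)\neq 0\}+1,
\]
which, by the Fan--Koenig formula \Thmref\ref{fankoeresult}, is precisely $\domdim A$ in the gendo-symmetric setting.

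With this identity in hand, the corollary follows at once from Theorem \ref{mainresult}. If $\domdim A\geq n+2$, then trivially $\domdim A\geq n$ and $\domdim R=\domdim A\geq n+2$, so $A$ has $n$-torsion-free Auslander--Reiten sequences. Conversely, if $A$ has $n$-torsion-free Auslander--Reiten sequences, then Theorem \ref{mainresult} gives $\domdim R\geq n+2$, and therefore $\domdim A=\domdim R\geq n+2$. The self-injective case is trivial, as every module is then $n$-torsion-free and $\domdim A=\infty$. There is no real obstacle in this argument: the only substantive observation is the Ext-group simplification that makes the Mueller and Fan--Koenig formulas compute the same invariant, and the rest is bookkeeping with the main theorem.
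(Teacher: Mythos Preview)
Your proof is correct and follows essentially the same route as the paper: establish $\domdim R=\domdim A$ via Mueller's theorem and the Fang--Koenig formula, then read off the corollary from Theorem~\ref{mainresult}. The only difference is cosmetic---you explicitly expand $\Ext_A^i(A\oplus D(A),A\oplus D(A))$ into four summands to justify why Mueller's formula reduces to $\inf\{i\geq 1:\Ext_A^i(D(A),A)\neq 0\}+1$, whereas the paper states this reduction without comment; your version is the clearer one.
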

\begin{proof}
	Let $R:=\End_A(A \oplus D(A))$.
	By \Thmref \ref{mainresult}, $A$ has $n$-torsion-free Auslander-Reiten sequences if and only if $\domdim(A) \geq n$ and $\domdim(R) \geq n+2$. By Mueller's theorem, the dominant dimension of $R$ is equal to $\inf \{ i \geq 1 | \Ext_A^i(D(A),A) \neq 0 \}+1$ and by \Thmref\ref{fankoeresult} this is also equal to the dominant dimension of $A$. Thus $\domdim(R) \geq n+2$ alone is equivalent to $A$ having dominant dimension at least $n+2$ and this proves the corollary.
\end{proof}

The previous corollary gives an easy construction of algebras having $n$-torsion-free Auslander-Reiten sequences since one just has to find gendo-symmetric algebras with dominant dimension at least $n+2$. See for example \cite{ChMar} for a large collection of gendo-symmetric algebras from Brauer tree algebras having arbitrary large dominant dimension.
We give another example of algebras having $n$-torsion-free Auslander-Reiten sequences, using non-representation-finite hereditary algebras. For that, recall that (following \cite{CIM}) for a given finite-dimensional algebra $A$, the \emph{$m$-th SGC extension algebra} $A^{[m]}$ is defined inductively as follows:
$A^{[0]}:=A$ and for $m \geq 0$: $A^{[m+1]}:=\End_{A^{[m]}}(D(A^{[m]}) \oplus A^{[m]})$.  

\begin{example}
	Let $A=kQ$ be a path algebra with a connected quiver $Q$ that is not a Dynkin quiver. Let $A^{[m]}$ be its $m$-th SGC extension algebra. Then $A^{[m]}$ has global dimension $2m+1$ and dominant dimension $2m$ by \citep[Theorem 1.1]{CIM}.
 Thus $A^{[m]}$ satisfies $\domdim A^{[m]} \geq 2m$ and $\domdim A^{[m+1]} \geq 2m+2 $ and our main \Thmref \ref{mainresult} tells us that $A^{[m]}$ has $2m$-torsion-free Auslander-Reiten sequences.
\end{example}

In \cite{Ta}, Tachikawa showed that non-semisimple Auslander algebras cannot have reflexive Auslander-Reiten sequences. Note that non-semisimple Auslander algebras have global dimension two. We generalise this here to arbitrary Gorenstein algebras with an easy proof:
\begin{Prop}
	Let $A$ be a Gorenstein algebra of Gorenstein dimension $g$ that has $g$-torsion-free Auslander-Reiten sequences. Then $A$ is selfinjective.
\end{Prop}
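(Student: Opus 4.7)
The plan is to translate the $g$-torsion-free Auslander--Reiten hypothesis into $\Ext$-vanishing via Theorem~\ref{mainresult} and Mueller's Theorem~\ref{muellertheo}, and then use the Gorenstein assumption to force $D(A)$ to be projective, from which selfinjectivity is immediate.

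By Theorem~\ref{mainresult}, the hypothesis yields both $\domdim A\geq g$ and $\domdim R\geq g+2$ for $R:=\End_A(A\oplus D(A))$. Mueller's Theorem~\ref{muellertheo} rephrases the second inequality as $\Ext_A^i(D(A),A)=0$ for $1\leq i\leq g+1$. Since $A$ is Gorenstein of Gorenstein dimension $g$, the injective module $D(A)$ satisfies $\pdim D(A)\leq \injdim A=g$, so $\Ext_A^i(D(A),A)$ vanishes automatically for $i>g$. Combining the two, $\Ext_A^i(D(A),A)=0$ for every $i\geq 1$.

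The crucial step is to upgrade this vanishing to the conclusion that $D(A)$ is projective. Starting from a minimal projective resolution $0\to P_g\to \cdots \to P_0\to D(A)\to 0$ and applying $(-)^{*}:=\Hom_A(-,A)$, the Ext-vanishing produces an exact sequence of left $A$-modules
\[0\to D(A)^{*}\to P_0^{*}\to P_1^{*}\to \cdots \to P_g^{*}\to 0\]
in which every $P_i^{*}$ is projective. A downward induction along this sequence, splitting off the projective quotient at each stage, shows that every kernel appearing in it is projective; in particular $D(A)^{*}$ itself is a projective left $A$-module. Applying $(-)^{*}$ once more and using the canonical isomorphisms $P_i^{**}\cong P_i$, the five-lemma identifies the canonical morphism $D(A)\to D(A)^{**}$ with an isomorphism, and $D(A)^{**}$ is projective because $D(A)^{*}$ is. Hence $D(A)$ is projective, which is equivalent to $A$ being selfinjective.

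The main obstacle is this last step of bootstrapping the Ext-vanishing into projectivity of $D(A)$; this is essentially the standard fact that over a Gorenstein algebra, any module $M$ of finite projective dimension with $\Ext_A^{\geq 1}(M,A)=0$ is already projective, but one has to verify it by hand in this form to keep the proof self-contained.
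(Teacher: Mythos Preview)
Your argument is correct, apart from a harmless off-by-one: Mueller's theorem applied to $\domdim R\geq g+2$ only gives $\Ext_A^i(D(A),A)=0$ for $1\leq i\leq g$, not $1\leq i\leq g+1$. Since you immediately observe that $\Ext_A^i(D(A),A)$ vanishes for $i>g$ by the projective-dimension bound, the combined conclusion $\Ext_A^{\geq 1}(D(A),A)=0$ is unaffected.

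The paper takes a shorter route. Instead of building up to ``$D(A)$ is projective'' by dualising a resolution, it argues by contradiction: if $A$ is not selfinjective then $g>0$ and $\pdim D(A)=g$, and it invokes the standard fact (recorded as \cite[VI, Lemma~5.5]{ARS}) that a module of finite projective dimension $d$ always satisfies $\Ext_A^d(-,A)\neq 0$; this clashes with the single vanishing $\Ext_A^g(D(A),A)=0$ coming from Theorem~\ref{mainresult}. So the paper only needs vanishing in the top degree, while you use vanishing in all degrees and then essentially reprove that cited lemma by hand via the double-dual argument. The payoff of your approach is that it is self-contained and does not appeal to \cite{ARS}; the payoff of the paper's approach is brevity and the observation that one Ext group is already enough.
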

\begin{proof}
	Assume that $A$ is not selfinjective and thus has positive Gorenstein dimension $g>0$. Then the module $D(A)$ has finite projective dimension equal to $g$ and by \citep[VI, Lemma 5.5]{ARS} we have $\Ext^g(D(A),A) \neq 0$.
	But by our main \Thmref \ref{mainresult}, $A$ having $g$-torsion-free Auslander-Reiten sequences implies that $\Ext^g(D(A),A)= 0$. This is a contradiction and thus $A$ has to be selfinjective.
\end{proof}

\begin{Cor}
	Let $A$ be an algebra of global dimension $g$ that has $g$-torsion-free Auslander-Reiten sequences. Then $A$ is semi-simple.
	
\end{Cor}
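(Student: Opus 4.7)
The plan is to reduce this to the preceding proposition by recognising an algebra of finite global dimension $g$ as a Gorenstein algebra of Gorenstein dimension $g$, and then combining the resulting selfinjectivity with finite global dimension to conclude semi-simplicity.

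First, I would verify that whenever $\gldim A = g < \infty$, the injective dimensions $\idim {}_A A$ and $\idim A_A$ both equal $g$; this is a standard fact for finite-dimensional algebras of finite global dimension. Consequently $A$ is a Gorenstein algebra of Gorenstein dimension exactly $g$. Second, I would apply the preceding proposition: the hypothesis that $A$ has $g$-torsion-free Auslander-Reiten sequences then forces $A$ to be selfinjective. Third, I would observe that a selfinjective algebra of finite global dimension is necessarily semi-simple. To see this, pick a simple $A$-module $S$ with $\pdim S = g$ and consider its minimal projective resolution; the top term $\Omega^g S$ is projective and hence, by selfinjectivity, also injective, so its inclusion into $P_{g-1}$ splits. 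This contradicts the minimality of the resolution unless $g = 0$. Therefore $\gldim A = 0$, which means $A$ is semi-simple.

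No step presents a substantial obstacle. The identification of Gorenstein dimension with global dimension in the finite case is standard, the preceding proposition is available, and the final collapse of a selfinjective algebra of finite global dimension to a semi-simple one is classical. The only mildly delicate point worth double-checking is that the Gorenstein dimension extracted in the first step agrees with the exponent $g$ appearing in the hypothesis on $g$-torsion-free Auslander-Reiten sequences, so that the proposition can be invoked as stated.
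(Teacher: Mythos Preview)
Your proposal is correct and follows exactly the route the paper intends: the corollary is stated immediately after the proposition on Gorenstein algebras with no separate proof, so the expected argument is precisely to observe that finite global dimension $g$ makes $A$ Gorenstein of Gorenstein dimension $g$, invoke the proposition to get selfinjectivity, and then use the classical fact that selfinjective plus finite global dimension forces semi-simplicity. Your verification of each step, including the minor caveat about matching the exponent $g$, is appropriate.
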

Recall that a module $M$ over an algebra $B$ is called \emph{$d$-cluster tilting} if $M$ is a generator-cogenerator and $\add M= \{ X \in \rmod B \mid \Ext_A^i(M,X)=0 \ \text{for} \ i=1,...,d-1\}$, see \cite{Iya}.
By Iyama's higher Auslander correspondence $d$-cluster tilting modules $M$ are in bijective correspondence with higher Auslander algebras $A$ given by sending $M$ to $A=\End_B(M)$.
\begin{Cor}\label{cor3dot7}
	Let $A \cong \End_B(M)$, where $M$ is a $d$-cluster tilting object in $\rmod B$ for a finite-dimensional algebra $B$.
	Then $B$ is $d$-representation-finite if and only if $A$ has $d$-torsion-free Auslander-Reiten sequences.
\end{Cor}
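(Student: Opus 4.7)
The plan is to apply Theorem~\ref{mainresult} to $A$ and analyse both of its dominant-dimension conditions using Mueller's theorem~\ref{muellertheo} and Iyama's higher Auslander correspondence.

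Since $M$ is a $d$-cluster tilting $B$-module, the defining property gives $\Ext_B^i(M,M)=0$ for $1\leq i\leq d-1$. Mueller's theorem~\ref{muellertheo} then yields $\domdim A\geq d+1\geq d$, so condition (1) of Theorem~\ref{mainresult} holds automatically, and the corollary reduces to showing that $\domdim R\geq d+2$ is equivalent to $B$ being $d$-representation-finite. Applying Mueller's theorem once more, now to the generator-cogenerator $A\oplus D(A)$ of $A\m$, gives
\[
\domdim R \;=\; 1+\inf\{\,i\geq 1:\Ext_A^i(D(A),A)\neq 0\,\},
\]
so the condition $\domdim R\geq d+2$ translates into the vanishing $\Ext_A^i(D(A),A)=0$ for all $1\leq i\leq d$.

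Next, I would invoke Iyama's higher Auslander correspondence \cite{Iya}: since $M$ is $d$-cluster tilting in $\rmod B$, the algebra $B$ is $d$-representation-finite precisely when $A=\End_B(M)$ is a higher Auslander algebra, i.e., $\gldim A\leq d+1$ together with $\domdim A\geq d+1$; the latter being already in hand, this reduces to $\gldim A\leq d+1$. To link this to the Ext vanishing, I would use the higher Serre-type duality available for higher Auslander algebras, which converts $\Ext_A^i(D(A),A)$ into $D\Ext_A^{d+1-i}(A,A)$, and the right-hand side vanishes for $1\leq i\leq d$ since $A$ is projective, giving the forward implication.

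The main obstacle is the backward implication: deducing $\gldim A\leq d+1$ from the Ext vanishing together with $\domdim A\geq d+1$. I would argue this by transferring the data across the Morita--Tachikawa equivalence $\Hom_A(D(A),-):\Dom_2\xrightarrow{\sim}\rmod B$. Applying this equivalence to the initial portion of the minimal injective coresolution of $A$ (whose first $d+1$ terms are projective-injective by $\domdim A\geq d+1$) produces an $\add M$-coresolution of $M$ in $\rmod B$, and the Ext vanishing translates into the exactness of this coresolution in degrees $1,\ldots,d$. Standard results of Iyama on $d$-cluster tilting subcategories then let us conclude that $\gldim B\leq d$, completing the proof.
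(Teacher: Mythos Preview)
Your reduction via Theorem~\ref{mainresult} and Mueller's theorem to the vanishing condition $\Ext_A^i(D(A),A)=0$ for $1\leq i\leq d$ is correct, and agrees with the paper. The problem is the equivalence you then invoke.

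You write that, by Iyama's higher Auslander correspondence, ``$B$ is $d$-representation-finite precisely when $A=\End_B(M)$ is a higher Auslander algebra''. This is false: since $M$ is assumed $d$-cluster tilting, the correspondence already guarantees that $A$ is a higher Auslander algebra, i.e.\ $\gldim A\leq d+1\leq \domdim A$, regardless of whether $\gldim B\leq d$. (The paper's proof states exactly this: ``$A$ is a higher Auslander algebra of dominant dimension $d+1$ since $M$ is $d$-cluster tilting''.) So your forward implication collapses to a tautology and does not actually establish the Ext vanishing. Moreover, the ``higher Serre-type duality'' you quote, $\Ext_A^i(D(A),A)\cong D\Ext_A^{d+1-i}(A,A)$, is not a general feature of higher Auslander algebras; indeed there exist such $A$ (with $B$ not $d$-representation-finite) for which $\Ext_A^i(D(A),A)\neq 0$ for some $1\leq i\leq d$, so the formula cannot hold as stated.

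For the backward implication your target $\gldim B\leq d$ is correct, but the sketch via the Morita--Tachikawa equivalence is too vague to be convincing, and in any case the forward direction is missing. The paper avoids all of this by citing \cite[Theorem~1.20]{Iya2}, which gives directly that $B$ is $d$-representation-finite if and only if $\Ext_A^i(D(A),A)=0$ for $i=1,\ldots,d$; combined with Mueller's theorem and Theorem~\ref{mainresult}, this yields the corollary in one line. You should either invoke that result or supply a genuine argument linking the Ext vanishing to $\gldim B\leq d$.
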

\begin{proof}
We can assume that $A$ is not semi-simple, since in this case the statement of the corollary is trivial.
Then $A$ is a higher Auslander algebra of dominant dimension $d+1$ since $M$ is $d$-cluster tilting.
	By \citep[Theorem 1.20]{Iya2}, $B$ is $d$-representation-finite if and only if $\Ext_A^i(D(A),A)=0$ for $i=1,...,d$. Thus the statement of the corollary is now a consequence of our main \Thmref \ref{mainresult} since $\Ext_A^i(D(A),A)=0$ for $i=1,...,d$ is equivalent to $\domdim \End_A(A\oplus DA)\geq d+2$ by Mueller's theorem \ref{muellertheo}.
\end{proof}

We recall the first Tachikawa conjecture, see \cite[Page 115]{Tac2}:
\begin{Conjecture}
Let $A$ be a finite dimensional algebra over a field with $\Ext_A^i(D(A),A)=0$ for all $i \geq 1$, then $A$ is selfinjective.
\end{Conjecture}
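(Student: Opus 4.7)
The plan is to leverage \Thmref \ref{mainresult} to translate the hypothesis into conditions on dominant dimensions, then iterate upwards to try to obtain $\domdim(A) = \infty$, and finally to use this to conclude selfinjectivity.

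First I would apply Mueller's theorem (\Thmref \ref{muellertheo}): the assumption $\Ext_A^i(D(A), A) = 0$ for all $i \geq 1$ is equivalent to $\domdim(R) = \infty$ for $R := \End_A(A \oplus D(A))$. If one could additionally show $\domdim(A) \geq n$ for every $n$, then \Thmref \ref{mainresult} would imply that $A$ has $n$-torsion-free Auslander-Reiten sequences for every $n \geq 1$. By \Thmref \ref{connectiontorsionfreedomdim}, every AR-sequence $0 \to P \to X \to \tau^{-1}(P) \to 0$ with $P$ indecomposable projective non-injective would then consist of modules in $\TF_n = \Dom_n$ for all $n$, and in particular the middle term $X$ and the cosyzygy $\tau^{-1}(P)$ would have infinite dominant dimension.

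Next I would attempt a bootstrap: assuming only $\domdim(R) = \infty$, try to promote it to $\domdim(A) \geq n$ by induction on $n$. For the base case $n = 1$, it suffices to note that the injective envelope of $A$ sits inside the additive closure of $A \oplus D(A)$ only if at least one indecomposable injective is projective, which one would hope to extract from the vanishing of $\Ext_A^{\geq 1}(D(A), A)$ by tracking carefully the minimal injective coresolution of $A$ in the spirit of the argument in \Thmref \ref{nTFimpliesdomdim}. For the inductive step, one would try to reuse the $n$-torsion-free AR-sequences (which hold under the inductive hypothesis) to force $\Ext_A^{n+1}$-vanishing between simples and $A$, pushing $\domdim(A)$ up by one.

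Finally, once $\domdim(A) = \infty$ is established, one would need to deduce selfinjectivity. That last step is precisely the (generalised) Nakayama conjecture, which is classically equivalent to the Tachikawa conjecture and is open. The main obstacle is exactly the gap $\domdim(R) = \infty \Rightarrow \domdim(A) = \infty$: \Thmref \ref{mainresult} demands the lower bound on $\domdim(A)$ as an \emph{independent} hypothesis and does not extract it from the vanishing of $\Ext_A^i(D(A), A)$ alone. Closing that gap seems to require genuinely new tools beyond the formal homological machinery developed here, which is why the statement has stood as an open conjecture despite the many reformulations, of which \Thmref \ref{mainresult} gives yet another.
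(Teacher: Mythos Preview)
The statement in question is presented in the paper as an open \emph{conjecture} (the first Tachikawa conjecture), not as a theorem; the paper gives no proof, only the remark that it would follow from the Nakayama conjecture. There is therefore nothing in the paper to compare your proposal against. You correctly diagnose the situation: the strategy you outline ultimately reduces to the Nakayama conjecture, which is open, and you say so explicitly. That is the honest and correct conclusion.

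One comment on the bootstrap you sketch: \Thmref\ref{mainresult} requires \emph{both} $\domdim(A) \geq n$ and $\domdim(R) \geq n+2$ as hypotheses in order to conclude that $A$ has $n$-torsion-free Auslander--Reiten sequences; it does not allow you to deduce $\domdim(A) \geq n$ from $\domdim(R) \geq n+2$ alone. Hence the inductive step you propose---using the $n$-torsion-free AR sequences (which already presuppose $\domdim(A) \geq n$) to push $\domdim(A)$ up to $n+1$---is circular as written. The base case is likewise only a hope, as you yourself flag. None of this affects your overall verdict that the problem is genuinely open, but the specific mechanism you propose does not get off the ground even before one reaches the Nakayama step.
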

This conjecture is a consequence of the Nakayama conjecture stating that every non-selfinjective finite dimensional algebra has finite dominant dimension.
Motivated by the results of our article, we state the following conjecture:

\begin{Conjecture}
Let $A$ be a finite-dimensional algebra over a field. If $A$ has $n$-torsion-free Auslander-Reiten sequences for every natural number $n$, then $A$ is self-injective.
\end{Conjecture}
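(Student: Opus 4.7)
\emph{Proof proposal.} The plan is to reduce the conjecture to the first Tachikawa conjecture (equivalently, under these hypotheses, to the Nakayama conjecture) by extracting the maximal homological consequences of the assumption via \Thmref\ref{mainresult}.

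First, I would apply \Thmref\ref{mainresult} for every $n \geq 1$. Since $A$ is assumed to have $n$-torsion-free Auslander-Reiten sequences for every natural number $n$, the theorem yields $\domdim(A) \geq n$ and $\domdim(R) \geq n+2$ for all $n$, where $R := \End_A(A \oplus D(A))$. Letting $n \to \infty$, both $A$ and $R$ have infinite dominant dimension.

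Next, I would invoke Mueller's theorem (\Thmref\ref{muellertheo}) for the generator-cogenerator $M = A \oplus D(A)$. Since $\domdim(R) = \infty$, we must have $\Ext_A^i(M, M) = 0$ for all $i \geq 1$, and in particular $\Ext_A^i(D(A), A) = 0$ for all $i \geq 1$. At this point the hypothesis of the first Tachikawa conjecture is satisfied, so under that conjecture $A$ would be self-injective; equivalently, $\domdim(A) = \infty$ together with the Nakayama conjecture would also force self-injectivity. Thus the stated conjecture follows from (and is essentially a special instance of) the first Tachikawa conjecture.

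The main obstacle is of course that both the Tachikawa and the Nakayama conjectures are long-standing open problems, so the reduction above is not an unconditional proof. For an unconditional argument one would need to exploit structural information coming from $n$-torsion-free Auslander-Reiten sequences that goes beyond the mere vanishing of $\Ext_A^i(D(A), A)$. A natural direction is to analyse, for each indecomposable projective non-injective $P$, the middle term $X$ in the AR sequence $0 \to P \to X \to \tau^{-1}(P) \to 0$: the assumption forces $P$, $X$ and $\tau^{-1}(P)$ to lie in $\bigcap_n \TF_n = \bigcap_n \Dom_n$, so iterating $\tau^{-1}$ and combining with the Auslander-Bridger transpose should produce families of modules in $\bigcap_n \Dom_n$ with controlled projective resolutions. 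Whether such refinements suffice to directly force self-injectivity, thereby bypassing Tachikawa and Nakayama, appears to be the crux of the problem.
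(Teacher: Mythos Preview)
Your proposal is correct and matches the paper's own treatment: the statement is left as a \emph{conjecture}, and the paper only observes (immediately after stating it) that by \Thmref\ref{mainresult} it would follow from the first Tachikawa conjecture, hence in particular from the Nakayama conjecture. Your reduction via \Thmref\ref{mainresult} and Mueller's theorem to $\Ext_A^i(D(A),A)=0$ for all $i\geq 1$ is exactly this observation, and you rightly flag that no unconditional proof is available.
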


By \Thmref \ref{mainresult}, this conjecture holds if the first Tachikawa conjecture holds. In particular, this conjecture holds if the Nakayama conjecture holds.

Motivated by computer experiments with \cite{QPA}, we pose also the following question:
\begin{Question}
Let $A$ be a finite dimensional algebra with $m$ simple modules and assume that $A$ has $2m$-torsion-free Auslander-Reiten sequences. Is $A$ selfinjective?
\end{Question}
This is for example true for Nakayama algebras, since for Nakayama algebras with $m$ simple modules having dominant dimension $\geq 2m$ already implies that they are selfinjective, see \cite[Result 2]{Mar}. It would be especially interesting to know whether this question holds for general representation-finite algebras.
\section{Torsion modules and the dominant dimension of an algebra}

We now conclude by exhibiting how the dominant dimension of an algebra can be computed using torsion modules. 

For any $M\in A\m$, define $t(M)$ as the kernel of the canonical map $f_M\colon M\rightarrow M^{**}$ and denote by $I(M)$ the injective hull of $M$. Modules of the form $t(M)$ are called \emph{torsion modules}. Observe that $\Ext_A^1(\Tr M, A)$ is isomorphic to $t(M)$. Indeed for a module $M$ with minimal projective presentation $P_1 \rightarrow P_0 \rightarrow M \rightarrow 0$, the sequence $$0\rightarrow \Tr(M)^*\rightarrow P_1^{**}\rightarrow Z^*\rightarrow \Ext^1_A(\Tr M, A)\rightarrow 0$$ is exact and $P_1^{**}\rightarrow P_0^{**}$ factors through $Z^*$, where $Z$ is the kernel of the surjective map ${P_1^*\rightarrow \Tr M}$.
This means that the following diagram is commutative
\begin{equation}
	\begin{tikzcd}
		0 \arrow[r] & Z^* \arrow[r] &P_0^{**} \arrow[r] & M^{**} &\\
		& P_1^{**}\arrow[u] & & & & \\
		&	P_1 \arrow[u, "\simeq"] \arrow[r] & P_0 \arrow[uu, "\simeq"] \arrow[r]& M \arrow[r] \arrow[uu]& 0 
	\end{tikzcd}
\end{equation}
By the Snake Lemma, $t(M)\simeq \coker(P_1^{**}\rightarrow Z^*)\simeq \Ext_A^1(\Tr M, A)$.

Inspired by Proposition 2.7 of \cite{zbMATH06409569}, we get the following.

\begin{Prop}
	Let $A$ be a finite-dimensional algebra over a field. Assume that $A$ has dominant dimension at least one. Then,
	\begin{align}
		\inf\{\grade(t(M))\colon M\in A\m \}\geq \domdim A.
	\end{align}
\end{Prop}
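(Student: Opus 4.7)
The plan is to prove the pointwise bound $\grade(t(M)) \geq \domdim A$ for every $M \in A\m$ with $t(M)\neq 0$; the infimum formulation of the proposition will then follow. The argument will combine an unconditional torsion property of $t(M)$ with the projectivity of the low-degree terms of the minimal injective coresolution of $A$.

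The starting point is the identification $t(M) \simeq \Ext_A^1(\Tr M, A)$ recorded just before the proposition, together with the classical Auslander--Bridger grade inequality $\grade(\Ext_A^i(X, A)) \geq i$, valid for every $X \in A\m$ and every $i \geq 1$ (see \cite{AB69}). Specialising to $X = \Tr M$ and $i = 1$ gives $\grade(t(M)) \geq 1$, equivalently $t(M)^* = \Hom_A(t(M), A) = 0$. This step uses nothing about the dominant dimension.

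For the main step, set $n = \domdim A$ and let $0 \to A \to E_0 \to E_1 \to \cdots$ be the minimal injective coresolution of $A$. The hypothesis forces $E_0, \ldots, E_{n-1}$ to be projective, so each such $E_j$ is a direct summand of some free module $A^{r_j}$. Consequently
\[
\Hom_A(t(M), E_j) \text{ is a direct summand of } \Hom_A(t(M), A)^{r_j} = (t(M)^*)^{r_j} = 0,
\]
so $\Hom_A(t(M), E_j) = 0$ for $0 \leq j \leq n-1$. Since $\Ext_A^j(t(M), A)$ is computed as the $j$-th cohomology of the complex $\Hom_A(t(M), E_\bullet)$, the vanishing of these low-degree terms forces $\Ext_A^j(t(M), A) = 0$ for $0 \leq j \leq n-1$, which is exactly $\grade(t(M)) \geq n$.

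I do not expect a substantive obstacle: the proof is essentially a direct combination of the Auslander--Bridger bound with the definition of the dominant dimension. The only subtlety worth flagging is the unconditional vanishing $t(M)^* = 0$; once that is in place, the projectivity of the first $n$ injective cosyzygies of $A$ converts it directly into the desired grade bound.
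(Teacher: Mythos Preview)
Your proof is correct and follows essentially the same strategy as the paper: both obtain $\Hom_A(t(M),P)=0$ for every projective $P$ from an Auslander--Bridger result (you cite the grade inequality $\grade \Ext_A^i(X,A)\geq i$, while the paper cites the duality $\Hom_A(\Ext_A^1(\Tr M,A),P)\cong \Tor_1^A(\Tr M,P)=0$), and then exploit the projectivity of $E_0,\ldots,E_{n-1}$ to force the required $\Ext$-vanishing. Your direct reading of the cohomology of $\Hom_A(t(M),E_\bullet)$ is a slightly slicker packaging of the paper's explicit dimension-shifting, but the substance is identical.
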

\begin{proof}Denote by $P$ the minimal faithful projective-injective $A$-module. Let $M\in A\m$.
	By Theorem 2.8 of \cite{AB69}, $\Hom_A(\Ext_A^1(\Tr M, A), P)=\Tor_1^A(\Tr M, P)=0$. Recall that $\Ext_A^1(\Tr M, A)$ is isomorphic to $t(M)$.


Let $0\rightarrow A\rightarrow I_0\rightarrow I_1\rightarrow \cdots$ be the minimal injective resolution of $A$. By assumption, $I_i\in \add_A P$ for $0\leq i\leq n-1$, whenever $\domdim A\geq n$. Consider the exact sequences
\begin{align}
0\rightarrow \Omega^{-(i-1)}(A)\rightarrow I_{i-1}\rightarrow \Omega^{-i}(A) \rightarrow 0.
\end{align}Applying $\Hom_A(t(M), -)$ yields long exact sequences
\begin{equation}
    \begin{tikzcd}
  0 \rar & \Hom_A(t(M), \Omega^{-(i-1)}(A) ) \rar
             \ar[draw=none]{d}[name=X, anchor=center]{}
    & \Hom_A(t(M), I_{i-1})    \ar[rounded corners,
            to path={ -- ([xshift=2ex]\tikztostart.east)
                      |- (X.center) \tikztonodes
                      -| ([xshift=-2ex]\tikztotarget.west)
                      -- (\tikztotarget)}]{dll}[at end]{} \\      
\Hom_A(t(M), \Omega^{-i}(A)) \rar & \Ext_A^1(t(M), \Omega^{-(i-1)}) \rar & 0. 
\end{tikzcd} \label{eq28}
\end{equation}

Hence, the left part of these exact sequences yield $\Hom_A(t(M), \Omega^{-(i-1)}(A))=0$ for $1\leq i\leq n$. In particular, $\Hom_A(t(M), A)=0$. Now, the right part of (\ref{eq28}) gives
\begin{align}
\Ext_A^1(t(M), \Omega^{-(i-1)}(A))\simeq \Hom_A(t(M), \Omega^{-i}(A)), \quad 1\leq i\leq n.
\end{align}Therefore,
\begin{align}
\Ext_A^i(t(M), A)\simeq \Ext_A^1(t(M), \Omega^{-(i-1)}(A))=0, \quad 1\leq i\leq n-1.
\end{align}This shows that $\grade t(M)\geq n$.
\end{proof}

Actually, in \citep[Theorem 2.8]{AB69} we can replace $\Tr M$ by an arbitrary module and the degree one by any natural number, therefore, the same argument yields the following.
\begin{Cor}
Let $A$ be a finite-dimensional algebra over a field. Assume that $A$ has dominant dimension at least one. Then,
\begin{align}
\inf\{\grade(\Ext_A^i(M, A))\colon M\in \rmod A, \ i\geq 1 \}\geq \domdim A.
\end{align}
\end{Cor}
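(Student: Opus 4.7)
My strategy is to recycle the two-step argument of the preceding Proposition, with the only change being to draw on the stronger form of Auslander--Bridger's Theorem~2.8 already flagged in the remark that precedes the corollary. The Proposition's proof rests on (i) the Auslander--Bridger isomorphism $\Hom_A(\Ext_A^1(\Tr M,A),P)\simeq \Tor_1^A(\Tr M,P)$, which yields $\Hom_A(t(M),P)=0$ because $P$ is projective (hence flat), and (ii) a dimension-shifting induction along the initial projective-injective terms of the minimal injective coresolution of $A$. Step (ii) is insensitive to which module we are measuring the grade of, and the full version of \cite[Theorem~2.8]{AB69} upgrades (i) to $\Hom_A(\Ext_A^i(N,A),P)\simeq \Tor_i^A(N,P)$ for arbitrary right $A$-modules $N$ and any $i\geq 1$.

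To carry this out, fix $N\in\rmod A$ and $i\geq 1$ and set $X:=\Ext_A^i(N,A)$; write $n:=\domdim A$ and let $P$ denote the minimal faithful projective-injective $A$-module. Since $P$ is projective it is flat, so $\Tor_i^A(N,P)=0$, and the generalised Auslander--Bridger isomorphism forces $\Hom_A(X,P)=0$. Because $\domdim A\geq n$, the first $n$ terms $I_0,\dots,I_{n-1}$ of the minimal injective coresolution of $A$ lie in $\add_A P$, so $\Hom_A(X,I_j)=0$ for $0\leq j\leq n-1$. Applying $\Hom_A(X,-)$ to the short exact sequences
\[
0\to \Omega^{-(j-1)}(A)\to I_{j-1}\to \Omega^{-j}(A)\to 0,\qquad 1\leq j\leq n,
\]
exactly as in the proof of the Proposition, and performing a routine induction on $j$, yields $\Hom_A(X,\Omega^{-(j-1)}(A))=0$ together with the connecting isomorphism $\Ext_A^1(X,\Omega^{-(j-1)}(A))\simeq \Hom_A(X,\Omega^{-j}(A))$. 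Splicing these together produces $\Ext_A^j(X,A)\simeq \Ext_A^1(X,\Omega^{-(j-1)}(A))=0$ for $1\leq j\leq n-1$, as well as $\Hom_A(X,A)=0$; hence $\grade(X)\geq n=\domdim A$, as required.

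The bulk of the work is bookkeeping. The sole non-formal input is the general form of \cite[Theorem~2.8]{AB69}, and the dimension-shift portion of the Proposition's argument transfers verbatim once $t(M)=\Ext_A^1(\Tr M,A)$ is replaced by the generic $X=\Ext_A^i(N,A)$. The only potential obstacle is to check that the Proposition's computation never secretly used the specific shape of $t(M)$; inspection shows the only such input was exactly the vanishing $\Hom_A(t(M),P)=0$, which the generalised Auslander--Bridger isomorphism supplies in full generality, so nothing beyond a careful notational substitution is needed.
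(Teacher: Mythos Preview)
Your proposal is correct and follows the paper's own argument essentially verbatim: the paper's proof is precisely the one-line observation that in \cite[Theorem~2.8]{AB69} one may replace $\Tr M$ by an arbitrary module and the degree one by any $i\geq 1$, after which the dimension-shifting portion of the preceding Proposition applies unchanged. Your write-up simply spells this out in detail.
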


\begin{Prop}
Let $A$ be a finite-dimensional algebra over a field. Assume that $A$ has dominant dimension at least one. Then,
\begin{align*}
\inf\{\grade(t(S))\colon S \text{ simple in } A \}\leq	\inf\left\{ \grade(S)\ \middle\vert \begin{array}{c}
    S \text{ simple in } A \\
    \text{ with } I(S) \text{ not projective}
  \end{array}\right\} \leq \domdim A.
\end{align*}
\end{Prop}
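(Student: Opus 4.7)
The plan is to establish the two inequalities separately, using the isomorphism $\Ext_A^i(S, A) \simeq \Hom_A(S, I_i)$ for every simple $A$-module $S$, where $0 \to A \to I_0 \to I_1 \to \cdots$ denotes the minimal injective coresolution of $A$ (this was obtained in the course of \Thmref \ref{nTFimpliesdomdim}). For the right-hand inequality, I would set $n := \domdim A$ and first assume $n$ is finite, so that $I_0, \ldots, I_{n-1}$ are projective-injective while $I_n$ is not. Any indecomposable summand of $I_i$ for $i < n$ has the form $I(T)$ with $T$ a simple satisfying $I(T)$ projective; hence, for a simple $S$ with $I(S)$ non-projective, $\Hom_A(S, I_i) = 0$ for all $i < n$, giving $\grade(S) \geq n$. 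On the other hand, since $I_n$ is not projective, I would pick an indecomposable non-projective summand $I(S)$ of $I_n$. Then $S$ sits in $\soc(I_n)$, so $\Hom_A(S, I_n) \neq 0$, forcing $\grade(S) = n$. This $S$ witnesses the bound. The case $\domdim A = \infty$ is handled by the same formula: either no simple with $I(S)$ non-projective exists (e.g.\ the selfinjective case) or every such simple has infinite grade, and $\inf \emptyset = +\infty$.

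For the left-hand inequality, the key observation is that whenever $S$ is simple with $I(S)$ non-projective, one has $t(S) = S$. Indeed, if $\Hom_A(S, A) \neq 0$, then $S$ embeds into $A$, so $I(S)$ would be a direct summand of $I_0$; but $\domdim A \geq 1$ forces $I_0$ to be projective, contradicting that $I(S)$ is non-projective. Hence $S^* = \Hom_A(S, A) = 0$, the evaluation map $f_S \colon S \to S^{**} = 0$ is the zero map, and $t(S) = \ker f_S = S$. In particular $\grade(t(S)) = \grade(S)$ for every such $S$, so restricting the left-hand infimum to this subset of simples already yields
\begin{align*}
\inf\{\grade(t(S))\colon S \text{ simple}\} &\leq \inf\{\grade(t(S))\colon S \text{ simple},\, I(S) \text{ non-proj}\} \\
&= \inf\{\grade(S)\colon S \text{ simple},\, I(S) \text{ non-proj}\},
\end{align*}
which is what we want.

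I do not anticipate a serious obstacle; the only mild subtlety is the uniform treatment of the degenerate cases (selfinjective $A$ or $\domdim A = \infty$), but these collapse to trivial inequalities once one adopts the conventions $\inf \emptyset = +\infty$ and $\grade(0) = +\infty$.
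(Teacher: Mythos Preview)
Your proof is correct. For the left inequality both arguments rest on the dichotomy $t(S)\in\{0,S\}$ for simple $S$: the paper shows $I(S)$ projective $\Rightarrow t(S)=0$ via $\TF_1=\Omega^1(\rmod A)$, whereas you prove the converse implication ($I(S)$ non-projective together with $\domdim A\geq 1$ forces $S^*=0$, hence $t(S)=S$). Your direction is in fact the one needed to obtain $\inf\{\grade(t(S))\}\leq\inf\{\grade(S):I(S)\text{ not projective}\}$, so your argument is more directly on target here. For the right inequality the routes differ more substantially: the paper argues contrapositively by induction on $n$, showing that if every simple $S$ with non-projective injective hull has $\grade(S)\geq n$ then $\domdim A\geq n$ (by locating a simple in $\soc\,\Omega^{-(n-1)}(A)$ whose hull would otherwise have to be projective); you instead set $n=\domdim A$, choose $S$ so that $I(S)$ is a non-projective indecomposable summand of $I_n$, and read off $\grade(S)=n$ from the isomorphism $\Ext_A^i(S,A)\simeq\Hom_A(S,I_i)$ already established in the paper. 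Your approach is shorter and produces an explicit witness realising the infimum; the paper's inductive argument is longer but makes the mechanism at each step of the minimal injective coresolution visible.
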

\begin{proof}
Let $S$ be a simple $A$-module. Note first that since $t(S)$ is a submodule of $S$ either $t(S)=S$ or $t(S)=0$. If $t(S)=0$, then we have $\grade t(S)=+\infty$. If the injective hull of $S$ is projective, then $S$ is a first syzygy module: $S \cong \Omega^1(\Omega^{-1}(S))$ and thus $S$ is torsionless, meaning that $t(S)=0$. Here we used the equality $\TF_1=\Omega^1(\rmod A)$, that is true for general finite-dimensional algebras $A$, see for example \cite[Proposition 1.6]{AR}. This shows that 
$$ 	\inf\{\grade(t(S))\colon S \text{ simple in } A \}\leq	\inf\{\grade(S)\colon S \text{ simple in } A \text{ with } I(S) \text{ not projective}\}.$$

Assume that $\grade S\geq n\geq 1$ for every simple $A$-module such that the injective hull of $S$ is not projective. We shall proceed by induction on $n$ to show that $\domdim A\geq n$. 
Assume that $n=2$. If $\domdim A=1$, then there exists a simple module $S$ contained in the socle of $\Omega^{-1}(A)$ such that the injective hull of $S$ is not projective. We have $\grade S \geq 2 $ and thus $\Ext_A^1(S, A)=0$ and the map $\Hom_A(S, I_0)\rightarrow \Hom_A(S, \Omega^{-1}(A))$ is surjective. By assumption, $I_0$ is projective, so $S$ cannot be in the socle of $I_0$ and thus  $\Hom_A(S, \Omega^{-1}(A))$ must be zero contradicting $S$ being in the socle of $\Omega^{-1}(A)$. Therefore, such a simple module cannot exist and we obtain $\domdim A\geq 2$.

Assume now that $n\geq 3$. In particular, $\grade S\geq n-1\geq 1$ for every simple $A$-module such that the injective hull of $S$ is not projective.  By induction, $\domdim A\geq n-1$. Then, $\domdim A\geq n$ if and only if every simple module in the socle of $\Omega^{-(n-1)}$ has dominant dimension at least one. 
To obtain a contradiction, assume that there exists a simple module $S$ in the socle of $\Omega^{-(n-1)}$ so that $I(S)$ is not projective. By assumption, $\grade S\geq n$ and so $0=\Ext_A^{n-1}(S, A)\simeq \Ext_A^1(S, \Omega^{-(n-2)}(A))$. This means that the induced map $\Hom_A(S, I_{n-2})\rightarrow \Hom_A(S, \Omega^{-(n-1)}(A))$ is surjective, and so $S$ should also be in the socle of $I_{n-2}$.  This implies that $I(S)$ is a summand of $I_{n-2}$ and in particular, it implies that $I(S)$ must be a projective module contradicting the choice of $S$.
\end{proof}

Combining the previous results, we obtain the following:

\begin{Theorem}

\label{prop6dot13} 
Let $A$ be a finite-dimensional algebra over a field. Assume that $A$ has dominant dimension at least one. Then,
\begin{align*}
\domdim A &=	\inf\{\grade(\Ext_A^i(M, A))\colon M\in \rmod A, i\geq 1 \} = \inf\{\grade(t(M))\colon M\in A\m \} \\&= \inf\{\grade(t(S))\colon S \text{ simple in } A \}.
\end{align*}
\end{Theorem}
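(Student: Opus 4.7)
The plan is to observe that the statement is a direct packaging of the three preceding propositions of this section, and all that remains is to assemble them via a chain of inequalities. Introduce the abbreviations
\[
a = \inf\{\grade(\Ext_A^i(M, A))\colon M\in \rmod A,\ i\geq 1\},\quad
b = \inf\{\grade(t(M))\colon M\in A\m\},\quad
c = \inf\{\grade(t(S))\colon S \text{ simple in } A\},
\]
so that the goal becomes $\domdim A = a = b = c$. I would prove this by establishing the cyclic chain $\domdim A \leq a \leq b \leq c \leq \domdim A$.

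The two ``upper bound'' inequalities $\domdim A \leq a$ and $c \leq \domdim A$ are \emph{exactly} the content of the Corollary and of the third Proposition proved above, respectively, and can be cited verbatim. For the middle step $a \leq b$, recall from the preliminary discussion in this section that for any $M \in A\m$ with minimal projective presentation $P_1 \to P_0 \to M \to 0$ we have the identification $t(M) \simeq \Ext_A^1(\Tr M, A)$. In particular, every value $\grade(t(M))$ appearing in the family defining $b$ also appears in the family defining $a$ (take the module $\Tr M \in \rmod A$ and the index $i = 1$), so an infimum over a larger family is at most an infimum over a smaller one, yielding $a \leq b$. The remaining step $b \leq c$ is the trivial monotonicity: simple modules form a sub-collection of $A\m$, so restricting the infimum of $\grade(t(-))$ to simples only can enlarge it.

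Chaining the four inequalities gives $\domdim A \leq a \leq b \leq c \leq \domdim A$, forcing equality throughout. Since all genuinely non-trivial content has already been established earlier, there is no real obstacle in the combination step itself; the main work was already carried out in the proof of the third Proposition, where the inductive argument on $n$ used injective coresolutions to upgrade ``$\grade S \geq n$ for every simple $S$ with non-projective injective hull'' into ``$\domdim A \geq n$''. The assumption $\domdim A \geq 1$ is inherited from the feeding propositions and is not needed again for the assembly.
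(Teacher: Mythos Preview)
Your proposal is correct and matches the paper's approach: the theorem is stated in the paper with no proof beyond ``Combining the previous results, we obtain the following'', and your cyclic chain $\domdim A \leq a \leq b \leq c \leq \domdim A$ is exactly the intended assembly. The only step you add beyond citation is $a \leq b$ via $t(M)\simeq \Ext_A^1(\Tr M,A)$, which is precisely the identification recorded at the start of the section, so nothing is missing.
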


\begin{example}
In the previous theorem, the assumption on the dominant dimension of $A$ cannot be omitted.
Let $A=KQ$ be a non-semisimple path algebra of a quiver $Q$, which is not linearly oriented of Dynkin type $A_n$. Then $KQ$ has dominant dimension zero. But for every indecomposable module $M$ we have $t(M)\cong M$ if $M$ is non-projective and $t(M)=0$ if $M$ is projective. Thus we have  
$$\inf\{\grade(t(M))\colon M\in A\m \} =\inf\{\grade(t(S))\colon S \text{ simple in } A \}=1,$$
but $\domdim A=0$. \end{example}

Another sufficient condition for these equalities to hold is to require that all objects $\Ext_A^i(M, A)$ have grade greater than one for every $i\geq 1$ and all right $A$-modules $M$.

\begin{Theorem}\label{thm4dot6}
    Let $A$ be a finite-dimensional algebra over a field. If $$\inf\{\grade(\Ext_A^i(M, A))\colon M\in \rmod A, i\geq 1 \}\geq 2$$ or $\inf\{\grade(t(M))\colon M\in A\m \}\geq 2$, then 
    \begin{align*}
\domdim A &=	\inf\{\grade(\Ext_A^i(M, A))\colon M\in \rmod A, i\geq 1 \} = \inf\{\grade(t(M))\colon M\in A\m \} \\&= \inf\{\grade(t(S))\colon S \text{ simple in } A \}.
\end{align*}
\end{Theorem}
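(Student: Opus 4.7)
The plan is to reduce everything to Theorem~\ref{prop6dot13} by showing that either hypothesis forces $\domdim A \geq 1$; Theorem~\ref{prop6dot13} then delivers the four-way equality, and the hypothesis pushes $\domdim A$ up to $\geq 2$. First I would note that hypothesis (A), $\inf\{\grade(\Ext_A^i(M, A))\colon M\in \rmod A, i\geq 1\}\geq 2$, is stronger than hypothesis (B), $\inf\{\grade(t(M))\colon M\in A\m\}\geq 2$, because the identification $t(M) \simeq \Ext_A^1(\Tr M, A)$ recalled at the opening of Section~4 exhibits the collection of $t(M)$'s as a sub-collection of $\{\Ext_A^i(N, A)\}$, so the infimum of the grades of the $t(M)$'s is at least the infimum over the $\Ext_A^i$'s. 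Hence it suffices to work under (B).

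The crux is to show that (B) already forces $\domdim A \geq 1$. I would argue by contradiction: suppose $\domdim A = 0$, so that $I_0$ is not projective, and choose a simple $S \subseteq \soc A$ for which $I(S)$ is not projective (note that $I(S)\neq S$, since a simple injective submodule of $A$ is automatically a projective summand). Consider $M := I(S)/S$ obtained from the short exact sequence
\begin{align*}
    0 \to S \to I(S) \to M \to 0.
\end{align*}
Since $I(S)$ is indecomposable injective with essential socle $S$, any nonzero morphism $I(S) \to A$ must be an embedding, and injectivity of $I(S)$ would then make it a direct summand of $A$, contradicting non-projectivity; hence $\Hom_A(I(S), A) = 0$. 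Applying $\Hom_A(-, A)$ to the short exact sequence gives $\Hom_A(M, A) = 0$, so $M^{**}=0$ and $t(M)=M$. The associated long exact sequence produces an embedding $\Hom_A(S, A)\hookrightarrow \Ext_A^1(M, A)$, and $\Hom_A(S, A) \neq 0$ because $S \subseteq \soc A$. Thus $\Ext_A^1(M, A) \neq 0$, whence $\grade(t(M)) = \grade(M) = 1 < 2$, contradicting (B).

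With $\domdim A \geq 1$ secured, Theorem~\ref{prop6dot13} immediately yields
\begin{align*}
    \domdim A = \inf\{\grade(\Ext_A^i(M, A))\} = \inf\{\grade(t(M))\} = \inf\{\grade(t(S))\},
\end{align*}
and (B) then forces $\domdim A \geq 2$ (in particular, the intermediate case $\domdim A = 1$ is excluded because it would give $\inf\{\grade(t(M))\} = 1$ by Theorem~\ref{prop6dot13}). The main obstacle is precisely the case $\domdim A = 0$: the simples $S \subseteq \soc A$ themselves are torsionless, so $t(S) = 0$ and $\grade(t(S)) = +\infty$, providing no information from hypothesis (B). The construction $M = I(S)/S$ is designed to sidestep this: killing the torsionless $S$ inside the non-projective injective $I(S)$ produces a genuine torsion module whose grade is forced to be one by the connecting homomorphism, furnishing the required witness.
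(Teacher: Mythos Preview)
Your proof is correct and takes a more direct route than the paper's. Both arguments reduce to establishing $\domdim A \geq 1$ from the grade hypothesis and then invoke Theorem~\ref{prop6dot13}, but the methods for that reduction differ. The paper works under the auxiliary hypothesis $\inf\{\grade(\Ext_A^1(M,A)) : M \in \rmod A\} \geq 2$ (to which both (A) and (B) reduce, by essentially the same identification $t(M)\simeq\Ext_A^1(\Tr M,A)$ you use) and proves a structural statement: every left $A$-module of positive grade actually has grade $\geq 2$; it then shows that every submodule $X$ of any $\Ext_A^1(M,A)$ satisfies $X^*=0$, whence $\Hom_A(\Ext_A^1(M,A), I_0(A))=0$ for all $M$, and concludes $\domdim A\geq 1$ by appealing to a result of Reiten. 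Your argument instead produces an explicit witness: if $\domdim A=0$, choose a simple $S\subseteq\soc A$ with $I(S)$ non-projective and set $M=I(S)/S$; then $\Hom_A(I(S),A)=0$ forces $M^*=0$, while the connecting map embeds $\Hom_A(S,A)\neq 0$ into $\Ext_A^1(M,A)$, so $t(M)=M$ has grade exactly $1$. Your approach is shorter, self-contained (no external citation needed), and names a concrete torsion module of small grade; the paper's route, on the other hand, yields the intermediate fact that under the hypothesis no left module can have grade exactly $1$, which is of independent interest.
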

\begin{proof}
    Our aim is to prove that $\domdim A\geq 1$, under the assumption that \begin{equation}
        \inf\{\grade(\Ext_A^1(M, A))\colon M\in \rmod A \}\geq 2. \label{eq16}
    \end{equation} For this, it is enough to show that the injective hull of $A$, $I_0(A)$, is projective. By Corollary 2.5 of \cite{Rei}, it is then enough to show that $\Hom_A(\Ext_A^1(M, A), I_0(A))=0$ for all $M\in \rmod A$.

   \textbf{Claim 1.} Given $M\in A\m$, if $\grade M>0$, then $\grade M\geq 2$. 
   
To show Claim 1, let $M\in A\m$ have positive grade. Then $M^*=\Hom_A(M, A)=0.$ Consider $P_1\rightarrow P_0\rightarrow M\rightarrow 0$. Applying $\Hom_A(-, A)$ yields the exact sequence $$0\rightarrow M^*=0\rightarrow P_0^*\rightarrow P_1^*\rightarrow \Tr M\rightarrow 0.$$ Applying $(-)^*$ once more, we obtain  the exact sequence
$$0\rightarrow (\Tr M)^*\rightarrow P_1^{**}\rightarrow P_0^{**}\rightarrow \Ext_A^1(\Tr M, A) \rightarrow \Ext_A^1(P_1^*, A)=0.$$ Thus, $M\simeq \Ext_A^1(\Tr M, A)$ as right $A$-modules. By assumption, $\grade \Ext_A^1(\Tr M, A)\geq 2$, and so it follows that $M$ has grade at least two, proving Claim 1.

Let $M\in \rmod A$. Let $X$ be an $A$-submodule of $\Ext_A^1(M, A).$ Define $\Tilde{X}:=\Ext_A^1(M, A)/X$. 
Applying $(-)^*$ to the exact sequence $
    0\rightarrow X\rightarrow \Ext_A^1(M, A) \rightarrow \Tilde{X}\rightarrow 0$ provides the exact sequence \begin{align*}
        0\rightarrow \Tilde{X}^*\rightarrow \Ext_A^1(M, A)^*\rightarrow X^*\rightarrow \Ext_A^1(\Tilde{X}, A)\rightarrow \Ext_A^1(\Ext_A^1(M, A), A).
    \end{align*}
    Since $\grade \Ext_A^1(M, A)\geq 2$ this exact sequence collapses to $\Tilde{X}^*=0$ and $X^*\simeq \Ext_A^1(\Tilde{X}, A)$. Hence, $\grade \Tilde{X}\geq 1$. By Claim 1, $\grade \Tilde{X}\geq 2$, and therefore $X^*\simeq \Ext_A^1(\Tilde{X}, A)=0.$ Thus, $\Hom_A(X, \soc A)=0$. Since $X$ is arbitrary, it follows that $\Hom_A(\Ext_A^1(M, A), I(\soc A))=0$ for all $M\in \rmod A$. So, $\domdim A\geq 1$. So, assuming that (\ref{eq16}) holds, the equalities now follow from \Thmref \ref{prop6dot13}.

    Now, it remains to see that both inequalities in the assumption of the theorem imply (\ref{eq16}).

Observe that, if $M$ is an indecomposable projective module, then $\grade (\Ext_A^1(M, A))=+\infty$. Otherwise, if $M$ is an indecomposable non-projective module, then there exists an indecomposable non-projective $M'\in A\m$ so that $\Tr M'\simeq M$ (see \citep[IV, Proposition 1.7]{ARS}. Hence, $\Ext_A^1(M, A)\simeq t(M')$ and  $$\inf\{\grade(\Ext_A^1(M, A))\colon M\in \rmod A \}\geq \inf\{\grade(t(M)\colon M\in A\m \}.$$
    Since $\inf\{\grade(\Ext_A^1(M, A))\colon M\in \rmod A \}\geq \inf\{\grade(\Ext_A^i(M, A))\colon M\in \rmod A, i\geq 1 \}$, the result follows.
\end{proof}

We note, however, that it is not enough to consider the torsion modules  of simple modules in the assumption of the previous theorem. Indeed, due to Ringel, there exists a finite-dimensional algebra $A$ with two simple modules, both being torsionless, and with $\domdim A=0$ (see \citep[2.1]{zbMATH07412003}. ) So for such an algebra $A$ we have $\inf\{\grade(t(S))\colon S \text{ simple in } A \}=+\infty$.

We finish with the following application of \Thmref\ref{prop6dot13}.

\begin{Cor}\label{cor4dot7}
Let $A$ be a higher Auslander algebra of global dimension $g$.
Then every non-zero torsion $A$-module has projective dimension equal to $g$.
\end{Cor}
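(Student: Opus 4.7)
The plan is to sandwich $\pdim t(M)$ between $g$ and $g$, using \Thmref \ref{prop6dot13} for the lower bound on grade and the global dimension hypothesis for the upper bound on projective dimension. First I would unpack the definition: since $A$ is a higher Auslander algebra of global dimension $g$, there exists some $n\geq 2$ with $g=\gldim A\leq n\leq \domdim A$, so in particular $\domdim A\geq g\geq 2$. This ensures the hypothesis of \Thmref \ref{prop6dot13} is satisfied, yielding
\[
\grade(t(M)) \;\geq\; \inf\{\grade(t(N))\colon N\in A\m\} \;=\; \domdim A \;\geq\; g
\]
for every $M\in A\m$ with $t(M)\neq 0$ (the case $t(M)=0$ being vacuous).

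Next I would produce the matching upper bound. Since $\gldim A=g$, every $A$-module, and in particular $t(M)$, satisfies $\pdim t(M)\leq g$. To turn this into a bound on the grade, I would invoke the standard fact that for a non-zero finitely generated module $N$ over a finite-dimensional algebra with $\pdim N=d<\infty$, one has $\Ext_A^d(N,A)\neq 0$, and hence $\grade(N)\leq \pdim N$. A clean way to justify this is to take a minimal projective resolution $0\to P_d\to P_{d-1}\to\cdots\to P_0\to N\to 0$ and observe that $\Ext_A^d(N,A)$ is isomorphic to $\Tr(\Omega^{d-1}(N))$; the minimality of the resolution forces $\Omega^{d-1}(N)$ to be non-projective, so its transpose is non-zero.

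Applying this to $N=t(M)$ gives $\grade(t(M))\leq \pdim t(M)\leq g$, and combining with the first paragraph yields the chain
\[
g\;\leq\;\grade(t(M))\;\leq\;\pdim t(M)\;\leq\; g,
\]
forcing $\pdim t(M)=g$ as claimed.

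The only real content beyond quoting \Thmref \ref{prop6dot13} is the inequality $\grade(N)\leq \pdim N$ for finitely generated modules of finite projective dimension, which is entirely routine via the transpose of the top syzygy. I do not anticipate a genuine obstacle; the proof is essentially a direct application of \Thmref \ref{prop6dot13} once the higher Auslander hypothesis is translated into $\domdim A\geq \gldim A$.
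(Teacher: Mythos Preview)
Your proposal is correct and follows exactly the paper's approach: sandwich via $g\leq\domdim A\leq\grade(t(M))\leq\pdim t(M)\leq\gldim A=g$, with the lower bound coming from \Thmref\ref{prop6dot13}. The only difference is that you spell out the standard inequality $\grade(N)\leq\pdim N$ via the transpose of the top syzygy, whereas the paper simply uses it; your side remark that $g\geq 2$ is not strictly implied by the definition, but it is irrelevant since all you need is $\domdim A\geq 1$, which follows from $\domdim A\geq n\geq 2$.
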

\begin{proof}
Let $N=t(M)$ be a non-zero torsion module.
Then we have $\grade(N)\geq \domdim A=g$ by \ref{prop6dot13}. But we also have $g\leq\grade(N) \leq \pdim (N) \leq \gldim A=g$.
Thus $\pdim(N)=g$ for all non-zero torsion modules $N$.
\end{proof}

\section*{Acknowledgements}
This work began when the first-named author was visiting the Max Planck Institute for Mathematics at Bonn, and so the first-named author would like to thank the MPIM for its hospitality during the stay.



\bibliographystyle{alphaurl}
\bibliography{bibarticle}


\end{document}